\def\ps@pprintTitle{%
 \let\@oddhead\@empty
 \let\@evenhead\@empty
 \let\@oddfoot\@empty
 \let\@evenfoot\@empty
}
\newtheoremstyle{myaxiom}
  {0.3\baselineskip plus 0.2ex minus 0.1ex} 
  {0.3\baselineskip plus 0.2ex minus 0.1ex} 
  {}        
  {2em}        
  {\bfseries} 
  {:}        
  {1em}     
  {\thmname{#1}~\thmnumber{#2}\thmnote{(#3)}}
\theoremstyle{myaxiom}
\newtheorem{axiom}{Axiom}[section]
\newtheorem{Definition}{Definition}[section]
\newtheorem{theorem}{Theorem}[section]
\newtheorem{lemma}{Lemma}[section]
\begin{document}

\title{On the consistency of $\mathit{NF}$ via \emph{Fuzzy Forcing}}


\author[1]{Nicolás \textsc{Sevilla Simón}\corref{cor1}\orcidlink{0009-0004-4887-0483}}
\ead{nicolsev@ucm.es}
\address[1]{Faculty of Philosophy, Universidad Complutense, Madrid, Spain}
\cortext[cor1]{Corresponding author}

\begin{abstract}
In this paper, we present a proof of the consistency of the \emph{New Foundations} set theory ($\mathit{NF}$). $\mathit{NF}$'s main idea is to permit very large sets (including the Universal Set) by restricting set formation to stratified formulas, thereby avoiding the classic set-theoretic paradoxes. Our proof employs  a new forcing method incorporating concepts from fuzzy logic.
A brief outline of the proof can be as follows: (1) We extend $ZF$ to \emph{Fuzzy} $\mathit{ZF}$ with a membership function $\mu$ over $D=\mathbb{Q} \cap [0,1]$; (2) we define \emph{Fuzzy} $\mathit{NF}$ as $\Sigma$, and (3) we derive a crisp $\mathrm{N}$ model of $NF$. Our proof does not depend on Holmes' Tangled Type Theory ($\mathit{TTT}$). It establishes that if $\mathit{ZF}$ is consistent, then $\mathit{NF}$ is also consistent. It achieves that via the chain $\mathit{ZF} \rightarrow$ \emph{Fuzzy} $\mathit{ZF} \rightarrow \Sigma \rightarrow \mathit{NF}$. The method presented in this paper offers a novel perspective connecting fuzzy logic with classic set theory.
\end{abstract}

\begin{keyword}
{\emph{First-Order Logic}, \emph{Fuzzy Forcing}, \emph{Fuzzy Sets}, \emph{Set Theory},  \emph{Compactness}, $\mathit{NF}$, $\mathit{ZFC}$.}
\end{keyword}

\date{}

\maketitle

\section{Introduction}
\label{Introduction}
In 1937, Quine published in New Foundations for Mathematical Logic the \emph{New Foundations set theory}  ($\mathit{NF}$) ~\cite{GlossarWiki:Quine:1937} , where the author introduced a finitely axiomatizable and poorly founded set theory. It was initially proposed as an attempt to simplify Bertrand Russell and Alfred N. Whitehead's type theory developed in Principia Mathematica~\cite{Russell1910-RUSPMV}, and was also introduced as an alternative to Zermelo-Fraenkel's theory ($\mathit{ZF}$)~\cite{GlossarWiki:Zermelo:1908b}. The most interesting thing to date may be the impossibility of defining a Universal Set in $\mathit{ZF}$, in contrast to its existence in $\mathit{NF}$.

$\mathit{NF}$ defines a Universal Set without falling into contradiction through a comprehension scheme restricted to stratified formulas (instead of the Axiom of Separation, which is used in $\mathit{ZFC}$). Thus, paradoxes such as Russell's are avoided in $\mathit{NF}$. This is achieved through a syntactic restriction rather than a cumulative hierarchy. This flexibility is of interest both because of its properties and the theorems that may be provable from it. Furthermore, its foundational role and its connection to other mathematical disciplines are also of particular interest.



But for all of this to be possible, we must first address an essential question that has remained open for nearly 90 years: \emph{is $\mathit{NF}$ consistent?}


Previous advances in the consistency of $\mathit{NF}$ have been significant but limited. Specker~\cite{Specker1954-SPETAO-14} showed that $\mathit{NF}$ refutes the Axiom of Choice and is consistent with the existence of non-standard models. Jensen~\cite{9100459b-4987-3c14-a695-06573776c8fc} proved the consistency of a restricted variant of $\mathit{NF}$, denoted $\mathit{NFU}$. Note that $\mathit{NFU}$ uses \emph{urelements}.
For decades, the consistency of $\mathit{NF}$ remained unsolved, although Holmes~\cite{holmes1998elementary} conjectured its feasibility and developed proofs based on Tangled Type Theory ($\mathit{TTT}$). In 2024, Holmes and Wilshaw~\cite{holmes2024nfconsistent} appear to have established the consistency of $\mathit{NF}$ via a Lean formalization~\cite{10.1007/978-3-030-79876-5_37}, a significant advance awaiting final peer review.

\subsection{Organization of the paper}
The paper is organized as follows: Section~\ref{Introduction} presents a brief historical introduction to the problem of the consistency of $\mathit{NF}$; 
Section~\ref{Definitions} defines \emph{Fuzzy} $\mathit{ZF}$  and \emph{Fuzzy} $\mathit{NF}$; Section~\ref{modfinitos} proves finite models for $\Sigma$; Section~\ref{modinfinitos} constructs $\mathcal{M}$ via compactness; Section~\ref{extracModNitido} extracts $\mathrm{N}$ and verifies $\mathit{NF}$; and finally, Section~\ref{conclusions} discusses implications and future work.

\section{Definition of \emph{Fuzzy} $\mathit{ZF}$ and \emph{Fuzzy} $\mathit{NF}$}
\label{Definitions}
Throughout this paper, we use the following notation in the context of fuzzy set theory. $X$ is a reference domain over which fuzzy sets are constructed. $A_i$ denotes a particular fuzzy set. $V_n$ denotes the $n$-th level of the cumulative hierarchy of sets in $\mathit{ZF}$. $\Phi$ is a first-order formula for defining fuzzy sets. $\Sigma$ is the complete theory of \emph{Fuzzy} $\mathit{NF}$. Finally, $\Sigma_n$ denotes a finite subset of $\Sigma$ indexed by $n$. $U_X$ is defined as $U_X = \{\mu:X \rightarrow D\}$, that is, as the set of all fuzzy membership functions defined over $X$.  In summary, it constitutes the universe containing the fuzzy sets. $V$ is the universe of crisp sets. $V_X$ defines the global fuzzy universe, consisting of all subsets of the entire classical universe $V$. Formally, when $X=V, \: V_X=U_V=\{\mu:V \rightarrow D\}$.


The fuzzy logic used in this framework is a first-order classical logic that extends the system with a primitive symbol denoted as $\mu(x, A) = d$. This symbol represents the degree of membership, on a rational scale, of an element $x$ to a fuzzy set $A$. Semantically, $\mu$ is defined as a function, which assigns to each pair $(x, A)$ a rational value $d$, effectively encoding the fuzzy membership within the model.

Syntactically, $\mu$ functions as a ternary relational predicate $(x, A, d)$, allowing for the quantification of membership degrees directly within the logical language. This differs from Zadeh's framework, where $\mu$ is treated merely as a function. In this formulation, $\mu$ is regarded as a primitive logical symbol. Consequently, this approach facilitates the integration of fuzzy sets into the logical syntax of the system.

In contrast to algebraic fuzzy logics, we don't define t-norms in this context. Instead, the connectives are interpreted using classical semantics, meaning they function as Boolean functions. As a result, the models of this new [pseudo] fuzzy logic are still classical and first-order, but they are enhanced with fuzzy membership functions that represent fuzzy sets. Although the truth values of the formulas remain Boolean, the membership of elements in sets is not Boolean. This is because the fuzzy aspect pertains to the content of the sets, not the logical framework used to describe them. In addition, the deductive process remains classical.

\emph{Fuzzy} $\mathit{ZF}$ is, in essence, $\mathit{ZF}$ extended with our fuzzy logic framework. This constitutes a (seemingly minor yet significant) innovation relative to Zadeh’s fuzzy set theory: whereas Zadeh grounded his formalism in function theory to define fuzzy sets external to first-order logic, we formalize fuzzy membership \emph{within} the logical system itself, thereby directly integrating fuzzy sets into the logical syntax. This integration enables internal reasoning about the membership degrees that characterize fuzzy sets.

\emph{Fuzzy} $\mathit{NF}$. \emph{Mutatis mutandis}, it retains the core of $\mathit{NF}$ but allows fuzzy sets defined via stratified formulas to have fuzzy content. In \emph{Fuzzy} $\mathit{NF}$, all elements are represented as fuzzy sets.


 

\subsection{\emph{Fuzzy} $\mathit{ZF}$}

\begin{Definition}
We define \emph{Fuzzy} $\mathit{ZF}$ as an extension of $\mathit{ZF}$ that incorporates \emph{fuzzy} sets over crisp domains.
\end{Definition}
The formal theory of \emph{Fuzzy} $\mathit{ZF}$ consists of a language and a set of axioms based on first-order logic. Let $\mathcal{L}_1$ represent its formal language:

\begin{itemize}
\item $\in$ : crisp membership.

\item A domain $D=\mathbb{Q} \cap [0,1]$ is countable and equipped with the usual order $<$ and equality $=$ relations.

\item $\mu (x, A)$ : Fuzzy membership function, which assigns to each pair (x,A) a value of $D$, where $x$ is an element of a crisp domain and $A$ is a fuzzy set.


\item Standard logical symbols in first-order logic, connectives ($\wedge$, $\vee$, $\neg$, $\rightarrow$) and quantifiers ($\forall$, $\exists$).

\item $\mathit{U}$-sort: Domain of set elements (variables $\mathit{x}, \mathit{y}, \mathit{z}, \ldots$).

\item $\mathit{D}$-sort: Domain of membership degrees (variables $\mathit{v}, \mathit{d}, \ldots$).
\end{itemize}

This language provides the foundation for expressing the axioms and theorems of \emph{Fuzzy} $\mathit{ZF}$.

And let its axioms be the following:

\hspace*{0.3cm}\textbf{Axioms of $\mathit{ZF}$}:
Extensionality, Separation, Union, Power, Infinity, Replacement, Foundation (for crisp sets). From these axioms we provide a consistent basis for constructing a crisp set. 

\begin{axiom}{Existence of \emph{Fuzzy} Sets.}
\label{ax:existConjDif}

For every crisp set \( X \), there exists a set \( U_{X} = \{\mu: X \rightarrow D\} \). This set \( U_{X} \) includes all possible fuzzy membership functions defined on \( X \). With this axiom, we introduce the concept of fuzzy levels into \emph{Fuzzy} $\mathit{ZF}$, enabling fuzzy sets to exist as formal objects that we can manipulate.
The Power Set and Replacement Axioms allow one to construct $U_{X}$ in $\mathit{ZF}$.
\end{axiom}

\begin{axiom}{\emph{Fuzzy} Extensionality.}
\label{ax:DiffExtensionality}
$$
\forall X \, \forall A \in U_X \, \forall B \in U_X (\forall x \in X (\mu (x,A)=\mu (x,B)) \rightarrow A=B)
$$
This axiom specifies that two fuzzy sets $A$ and $B$ over the same domain $X$ are equal \emph{iff} they have the same membership function on all objects in $X$.

It is a natural extension of the $\mathit{ZF}$ Extensionality Axiom, but applied to $\mu$ functions instead of the relation $\in$. It is consistent with the fuzzy set theory of Zadeh et al. but adapted to our context.
\end{axiom}

\begin{axiom}{\emph{Fuzzy} Comprehension.}
\label{AxCompDifZF}

There exists $A \in U_{X}$ such that for every formula $\Phi (x, v)$ in the language $\mathcal{L}_1$, with $x$ as a set variable and $v$ as a degree variable from the domain $D$ of membership degrees, and for every set $X$, there exists a fuzzy set $A$ such that the membership degree of $x \in X$ in $A$ is determined by $\Phi(x,v)$. Formally:
\begin{align*}
\exists A \, \forall x \in X \, \forall d \in D (\mu (x,A) = d \leftrightarrow (\Phi (x,d) \land \\
\forall v \in D (v > d \rightarrow \, \lnot \Phi (x, v)) \, \land \exists v \in D (v \leq d \land \Phi (x, v))) \lor \\ 
(d=0 \land \forall v \in D \lnot \Phi_i (x,v)))
\end{align*}

This axiom allows us to define fuzzy sets based on properties definable in formulas. It facilitates us to generate fuzzy sets in \emph{Fuzzy} $\mathit{ZF}$, which is essential to model the stratified comprehension instances of \emph{Restricted} \emph{Fuzzy} $\mathit{NF}$ \footnote{This term is associated with fuzzy sets and comes with specific restrictions. This is detailed in Section~\ref{modfinitos}.} in $V_{n}$.
We do not restrict $\Phi(x,v)$ to stratification (as we will later do in \emph{Fuzzy} $\mathit{NF}$), because we want to allow flexibility in \emph{Fuzzy} $\mathit{ZF}$.


The formulation of \emph{Fuzzy} $\mathit{ZF}$ does not reproduce Russell's paradox. Unlike classical bivalent logic, where unrestricted comprehension leads to contradictions such as $R=\{x \: \mid \: x \notin x\}$, fuzzy logic avoids these constructs through a membership semantics with graded values.

The membership value denoted as \(\mu(x,A) = 1\) can be informally understood as indicating full membership. However, it should not be directly identified with the classical membership relation $x \in A$ unless the model explicitly defines this equivalence~\cite{10.5555/202684}.

In the context of \emph{Fuzzy} $\mathit{ZF}$, the $\mathit{ZF}$ axioms do not automatically apply when $\mu(x,A)=1$. Specifically, neither the separation schema nor the Foundation Axiom is activated, as would be the case in a classical framework.

This semantic distinction is crucial, since not equating $\mu(x,A)=1$ with $x \in A$ prevents the direct transfer of $\mathit{ZF}$ restrictions to fuzzy sets. Consequently, the Fuzzy Comprehension Axiom can generally be formulated without generating self-referential paradoxes, since membership functions are constructed in terms of achievable maximum degrees~\cite{Baldwin1980-BALTRO-2}.

There is no self-referential comprehension at the construction level. Fuzzy sets are defined by functions on crisp domains, and the function $\mu$ is constructed by applying fuzzy comprehension to formulas that depend on $x$ and $d$, but not directly on the function $\mu$ itself. This prevents the emergence of hazardous self-referential definitions, since $\Phi(x, d)$ is not permitted to refer to $\mu(X, A)$. This axiom authorizes quantification only over variables $x$ in the crisp domain and values $d$ in $D$. Hence, when constructing $U_X$, we rely exclusively on local properties of the elements of $X$ and the values in $D$.
\end{axiom}

In summary, the axioms of $\mathit{ZF}$ and the Existence Axiom of $U_X$ (Axiom~\ref{ax:existConjDif}) ensure that \emph{Fuzzy} $\mathit{ZF}$ is a natural extension of $\mathit{ZF}$, inheriting its consistency. Crisp sets constitute the foundation, while fuzzy sets are defined as an extension of $\mathit{ZF}$, preserving its structure without altering it. The axioms Fuzzy Extensionality (Axiom ~\ref{ax:DiffExtensionality}) and Fuzzy Comprehension (Axiom ~\ref{AxCompDifZF}) allow us to define $U_{X}$ and generate fuzzy sets, such as $V_{X}$ (universal within the finite domain $X$ in the context of \emph{Restricted Fuzzy} $\mathit{NF}$) and $A_{i}$, which model the stratified formulas of \emph{Fuzzy} $\mathit{NF}$ in domains such as $V_{n}$. 
It is essential for building finite models, as will be explained later.
The choice of countable $D$ guarantees a countable language, a highly recommended requirement for the Compactness Theorem. Unlike the theories of Zadeh~\cite{ZADEH1965338}, Gödel~\cite{Goedel1931}, and \L{}ukasiewicz~\cite{Lukasiewicz1988-UKAOLT-2}, we do not include specific fuzzy operations (e.g., \emph{min}, \emph{max}). 

The conservativity of \emph{Fuzzy} $\mathit{ZF}$ is established through the following criterion: \emph{Fuzzy} $\mathit{ZF}$ is considered a conservative extension of $\mathit{ZF}$ if every classic formula $\Phi$ in the language of $\mathit{ZF}$ that can be proven using the axioms of \emph{Fuzzy} $\mathit{ZF}$ is also provable from the axioms of $\mathit{ZF}$ alone. This conservativity criterion is met because the new axioms of \emph{Fuzzy} $\mathit{ZF}$ (specifically, Axiom~\ref{ax:existConjDif}, Axiom~\ref{ax:DiffExtensionality}, and Axiom~\ref{AxCompDifZF}) only expand the framework by introducing fuzzy sets while preserving all the properties and provable theorems related to crisp sets.

Fuzzy sets remain ontologically distinct from the crisp ones and do not interact with the $\in$-relations governing them, thus maintaining the integrity of classic set-theoretic foundations.


\begin{theorem}
Relative consistency of \emph{Fuzzy} $\mathit{ZF}$.

If $\mathit{ZF}$ is consistent, then \emph{Fuzzy} $\mathit{ZF}$ is consistent.

\begin{proof}
We assume that $M=(V_n,\in_M)$ is a model of $\mathit{ZF}$. Our goal is to construct, within $M$, a structure $M^{*}$ that satisfies all axioms of \emph{Fuzzy} $\mathit{ZF}$.

\subsection*{1. Construction of the Universe}
Let $D$ be a set in $M$ (by definability). Define $V_X = \{\mu \mid \mu: V_n \rightarrow D\}$, the collection of all functions from $V_n$ to $D$. This is a set in $M$ by the Replacement and Power Set axioms. The universe of \emph{Fuzzy} $\mathit{ZF}$ is the disjoint union:
\[
U = V_n \sqcup V_X,
\]
where $V_n$ is the domain of crisp sets and $V_X$ is the domain of fuzzy sets.

\subsection*{2. Definition of Membership Relations}
\begin{itemize}
    \item \textbf{Classical membership:} Interpreted as $\in_M$ on $V_n$.
    \item \textbf{Fuzzy membership:} For $x \in V_n$ and $A \in V_X$, define $\mu(x, A) = A(x)$. We do not define $x \in A$ for $A \in V_X$, nor do we identify $\mu(x, A) = 1$ with $x \in A$. This ensures a strict semantic and ontological separation between the crisp and fuzzy universes.
\end{itemize}

\subsection*{3. Verification of Axioms}
We verify that $M^{*} = (U, \in, \mu, D)$ satisfies all axioms of \emph{Fuzzy} $\mathit{ZF}$.

\subsubsection*{A. Classical $\mathit{ZF}$ Axioms}
Restricted to $V_n$, these are satisfied because $(V_n, \in_M)$ is a model of $\mathit{ZF}$ by hypothesis.

\subsubsection*{B. Existence of Fuzzy Sets}
For any $X \in V_n$, the set $U_X = \{\mu: X \rightarrow D\}$ exists in $M$ by the Power Set and Replacement axioms. Since $X \subseteq V_n$ and $D \in V_n$, $U_X$ is a definable subclass of $V_X$ and hence a set in $V_n$.

\subsubsection*{C. Extensionality for Fuzzy Sets}
For all $\mu_1, \mu_2 \in U_X$, if $\mu_1(x) = \mu_2(x)$ for all $x \in X$, then $\mu_1 = \mu_2$ as functions, by the extensionality of functions in $\mathit{ZF}$.

\subsubsection*{D. Comprehension for Fuzzy Sets}
Let $\Phi(x,d)$ be a formula in the language of \emph{Fuzzy} $\mathit{ZF}$. Define a fuzzy set $A \in V_X$ by:
\[
\mu (x,A) = d \Leftrightarrow (\Phi (x,d) \land \forall v > d \: \lnot \Phi (x,v)) \lor (d = 0 \land \forall v \in D \: \lnot \Phi (x,v))
\]
This is well-defined in $M$ by Separation and Replacement. The biconditional required by Axiom \ref{AxCompDifZF} holds by construction.

\subsection*{Conclusion}
The structure $M^{*}$ constructed within $M$ satisfies all axioms of \emph{Fuzzy} $\mathit{ZF}$. Therefore, if $\mathit{ZF}$ is consistent, then so is \emph{Fuzzy} $\mathit{ZF}$.
\end{proof}
\end{theorem}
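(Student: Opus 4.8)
The plan is to recast the informal construction as a relative interpretation of \emph{Fuzzy} $\mathit{ZF}$ inside $\mathit{ZF}$. Working inside an arbitrary model $M \models \mathit{ZF}$ — or, better, via a purely syntactic translation of the two-sorted language into the language of $\mathit{ZF}$, so that the argument is finitistic and does not presuppose a set model handed down from $\mathrm{Con}(\mathit{ZF})$ — I would build a structure $M^{*}$ whose $U$-sort carries the crisp sets of $M$ together with a disjoint, tagged copy of the functions into $D$, whose $\in$ is the ambient membership restricted to the crisp copy, and whose $\mu$ is function application on the fuzzy copy. The guiding observation, which is what actually makes consistency transfer, is that the crisp and fuzzy parts are kept ontologically separate and no axiom of \emph{Fuzzy} $\mathit{ZF}$ allows $\in$ and $\mu$ to interact; hence the only route to a contradiction in $M^{*}$ runs through $M$ itself, and there is none by hypothesis.

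The steps, in order, would be: (1) fix $D = \mathbb{Q}\cap[0,1]$ as a definable set, with its order and equality; (2) take the crisp sort to be the whole universe of $M$ (or $V_n$ in the set-model presentation), and for each crisp set $X$ put $U_X := {}^{X}D$, which is a set by Power Set and Separation since $U_X \subseteq \mathcal{P}(X\times D)$; the fuzzy objects are the members of these $U_X$'s, tagged to be disjoint from the crisp sets — here I must dispose of the one class-sized object in the informal account, the ``global fuzzy universe'' $V_X$ with $X = V$, by treating it as a definable proper class rather than a set (just as $V$ itself is), so that Axiom~\ref{ax:existConjDif} is asserted, and needed, only for set-sized $X$; (3) interpret $\in$ as $\in_M$ on the crisp tag, define $\mu(x,A)$ only for $x$ crisp and $A$ fuzzy with value $A(x)$, and deliberately decline to introduce $x \in A$ for fuzzy $A$ or to identify $\mu(x,A)=1$ with membership, since that abstention is exactly what prevents $\mathit{ZF}$'s Separation and Foundation from leaking onto the fuzzy sort; (4) verify the axioms — the $\mathit{ZF}$ axioms hold on the crisp sort because that sort is literally the universe of $M$, Fuzzy Extensionality (Axiom~\ref{ax:DiffExtensionality}) is extensionality of functions, Existence of Fuzzy Sets (Axiom~\ref{ax:existConjDif}) is the construction in step (2), and Fuzzy Comprehension (Axiom~\ref{AxCompDifZF}) is witnessed, for a given $\Phi$ and $X$, by the function $A$ read off pointwise from the explicit biconditional in the axiom.

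The step that requires real care — and the one I expect to be the main obstacle — is Fuzzy Comprehension, precisely the claim that the pointwise prescription defines a \emph{total} function $A\colon X \to D$. For fixed $x$ the axiom forces $\mu(x,A)$ to be the greatest $d \in D$ with $\Phi(x,d)$, or $0$ if no such $d$ exists; but $D$ is dense, so a definable set such as $\{d \in D : d < 1/2\}$ has no greatest element, and its supremum in $[0,1]$ may be irrational and hence outside $D$. So one must either read the axiom with the standing proviso that $\{d : \Phi(x,d)\}$ is empty or attains a maximum, or restrict the schema to the class of formulas for which maximality is provable — which, from the way the axiom is later invoked in Section~\ref{modfinitos} to realize stratified comprehension inside $V_n$, appears to be all that is actually used — or else redefine the value as a least upper bound together with an explicit clipping clause that forces it back into $D$. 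Once one of these readings is fixed, totality and uniqueness of $A$ are immediate and the biconditional holds by construction, and the remainder of the verification is routine; I would therefore settle this definitional point once and for all before discharging Axiom~\ref{AxCompDifZF}, here and in every later section that relies on it.
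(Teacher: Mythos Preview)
Your approach is essentially the paper's: inside a model $M\models\mathit{ZF}$, take the crisp sort to be $M$'s universe, the fuzzy sort to be (tagged) functions into $D$, interpret $\mu(x,A)=A(x)$, and verify the axioms in order; the paper's own proof is the one embedded in the theorem statement and follows exactly this outline.

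Where you diverge is in rigor rather than strategy, and in each case you are more careful than the paper. You note that the ``global fuzzy universe'' $V_X$ with $X=V$ is a proper class and must be handled as such; the paper introduces it in the preamble to Section~\ref{Definitions} but never revisits the point. More importantly, you isolate the genuine obstacle in Fuzzy Comprehension: the biconditional in Axiom~\ref{AxCompDifZF} demands a \emph{greatest} $d\in D$ with $\Phi(x,d)$, yet $D=\mathbb{Q}\cap[0,1]$ is dense and need not furnish one. The paper's part~D simply asserts ``this is well-defined in $M$ by Separation and Replacement'' and moves on, so your concern is not answered there; your proposed readings (restrict to formulas whose truth set attains a maximum, or insert a clipping clause) are reasonable and, as you observe, the later uses in Section~\ref{modfinitos} only need the tame instances. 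Your suggestion to phrase the whole thing as a syntactic interpretation rather than a set-model construction is likewise an upgrade, since it makes the relative-consistency claim finitistic, which the paper's ``assume $M=(V_n,\in_M)$ is a model of $\mathit{ZF}$'' does not.
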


\subsection{\emph{Fuzzy} $\mathit{NF}$}

\begin{Definition}
\emph{Fuzzy} $\mathit{NF}$ adapts $\mathit{NF}$ to a fuzzy framework, with a unique universe $U$, which contains all fuzzy elements and sets. 
Unlike \emph{Fuzzy} $\mathit{ZF}$, there is no prior distinction between crisp and fuzzy domains; $U$ is heterogeneous and defined by the axioms themselves.
\end{Definition}

Its language $\mathcal{L}_2$ (in the framework of first-order logic) includes:

\begin{itemize}
\item $\mu (x,A)$ : binary fuzzy membership function, which assigns to each pair %
$(x,A) \in U$ a value in countable $D = \mathbb{Q} \cap [0,1]$, with relations $<$ and $=$ defined on domain $D$.
\item Standard logic symbols in first-order logic:
\begin{enumerate}
\item connectives: $\wedge$, $\vee$, $\neg$, $\rightarrow$
\item quantifiers: $\forall$, $\exists$
\item $\mathit{U}$-sort: Domain of set elements (variables $\mathit{x}, \mathit{y}, \mathit{z}, \ldots$).
\item $\mathit{D}$-sort: Domain of membership degrees (variables $\mathit{v}, \mathit{d}, \ldots$).necessary to quantify over them. 

\end{enumerate}
\end{itemize}

Its axioms, in first-order logic, are as follows:

\begin{axiom}{\emph{Fuzzy} Extensionality}.
\label{fuzzyExtensionality}
$$
\forall A \, \forall B \, \forall x \, (\mu (x,A) = \mu (x,B)) \rightarrow A=B
$$
Note here that universal quantification $\forall x$ reflects the unrestricted universe of \emph{Fuzzy} $\mathit{NF}$, unlike \emph{Fuzzy} $\mathit{ZF}$, where it is restricted to $X$.
\end{axiom}

\begin{axiom}{\emph{Stratified Fuzzy} Comprehension.}
\label{ax:StrFuzzyComp}

For each stratified formula $\Phi_{i}(x,v)$ where $v \in D$, and the stratification applies to $\in$:
\begin{align*}
 \exists A \, \forall x \, \forall d \, \in D \, (\mu (x,A) = d \leftrightarrow 
 (\Phi _{i} (x,d) \: \land \\ 
 \forall v \in D \, (v > d \rightarrow \, \lnot \Phi _{i} (x,v)) \, \land \exists v \in D (v \leq d \, \land \Phi_i (x,v))) \: \lor \\ 
  (d = 0 \: \land \forall v \in D \: \lnot \Phi_i (x,v))) 
\end{align*}

A formula $\Phi_{i}(x,v)$ is stratified \emph{iff}, each variable can be assigned a type such that: $x \in z$ implies $\emph{type(x)} = \emph{type(z)} - 1$.

With this axiom, we generalize the comprehension of $\mathit{NF}$ to the fuzzy case. Through this mechanism, we will generate all fuzzy sets in $U$, which will be essential, as we will see later, for the infinite model $\mathcal{M}$ to contain the required $A_{i}$.
Quantification of $x$, $d$, $v$ maintains the axiom in first-order logic, with a countable number of instances (one for each $\Phi_{i}$).

\end{axiom}

\begin{axiom}{\emph{Fuzzy} Universality}.

$$
\exists V \, (\forall x \in U \, (\mu (x,V) = 1) \land \mu (V,V) = 1)
$$

This axiom postulates the existence of a fuzzy set $V$ in $U$ that completely contains (with degree $\mu = 1)$ all elements of $U$, including itself. It will be essential for the crisp submodel $N$ to inherit this property for $\mathit{NF}$.
It is also a first-order sentence suitable for applying compactness to it.
\end{axiom}

The complete set of axioms for \emph{Fuzzy} $\mathit{NF}$, expressed in first-order logic, will be denoted as $\Sigma$, which is a countable set of axioms because there is one for extensionality, one for universality, and $\aleph_0$ for comprehension (a countable number $\aleph_0$ of comprehension instances, one for each stratified $\Phi_{i}$). Finally, $\Sigma$ represents the complete theory.

\section{Models of Finite Fragments of \emph{Restricted Fuzzy} $\mathit{NF}$ in \emph{Fuzzy} $\mathit{ZF}$}
\label{modfinitos}

In this section, we aim to demonstrate that every finite subset $\Sigma_{n} \subseteq \Sigma$ of the axioms of \emph{Fuzzy} $\mathit{NF}$ possesses a constructible model within \emph{Fuzzy} $\mathit{ZF}$. By Gödel's Completeness Theorem, this implies that such finite subsets are consistent, as any set of first-order formulas that has a model is consistent in the proof-theoretic sense. This result establishes the finite satisfiability of $\Sigma$, thereby enabling the application of the Compactness Theorem. As a consequence, we can conclude the existence of an infinite model of \emph{Fuzzy} $\mathit{NF}$, since every finite subset is satisfiable and thus the entire set $\Sigma$ is satisfiable as well.

\begin{Definition}
We define \emph{Restricted Fuzzy} $\mathit{NF}$ on a crisp set $X$ within \emph{Fuzzy} $\mathit{ZF}$ by means of the following axioms:
\end{Definition}

\begin{axiom}{\emph{Restricted Fuzzy} Extensionality.}
$$
\forall A \in U_X \, \forall B \in U_X (\forall x \in X \, (\mu(x,A) = \mu (x,B)) \rightarrow A=B 
$$
\end{axiom}
That is, if two fuzzy sets $A$ and $B$ in $U_X$ have the same degree of membership for every element of $X$, they are equal.

\begin{axiom} {\emph{Restricted Fuzzy Stratified} Comprehension.}
\label{ax:ResFuzzStraCom}

For each stratified formula $\Phi_{i}$ in a finite subset $\Sigma_{n} \subseteq \Sigma$, there exists $A \in U_X$ such that:
\begin{align*}
 \exists A \, \forall x \, \forall d \, \in D \, (\mu (x,A) = d \leftrightarrow 
 (\Phi _{i} (x,d) \: \land \\ 
 \forall v \in D \, (v > d \rightarrow \, \lnot \Phi _{i} (x,v)) \, \land \exists v \in D (v \leq d \, \land \Phi_i (x,v))) \: \lor \\ 
 (d = 0 \: \land \forall v \in D \: \lnot \Phi_i (x,v))) 
\end{align*}
\end{axiom}

\begin{axiom}{\emph{Restricted Fuzzy} Universality.} \label{ax:resFuzzyUniversality}
$$
\exists V_X \in U_X(\forall x \in X \:\mu (x,V_X) = 1 ) \land \mu (V_X, V_X) = 1)
$$
\end{axiom}

\begin{Definition}
Let $X=V_n$ be the $n$ level of the cumulative hierarchy in $\mathit{ZF}$, with $n$ finite and large enough to contain the elements necessary to satisfy $\Sigma_n$. In $\mathit{ZF}$, $V_n$ is a finite and well-founded set, with $\lvert V_n \rvert $ finite, which provides a suitable domain for $U_X =\{\mu : X \rightarrow D\}$ in \emph{Restricted Fuzzy} $\mathit{NF}$.
\end{Definition}

\begin{lemma}{Model Construction of $U_X$.}

By the Existence Axiom of \emph{fuzzy} sets (Axiom~\ref{ax:existConjDif}) in \emph{Fuzzy} $\mathit{ZF}$, we define $U_X = \{ \mu : V_n \rightarrow D \}$, where $X=V_n$. \begin{proof}
Since $\lvert V_n \rvert$ is finite (say $k$) and $\lvert D \rvert =\aleph_0$, the cardinality of $U_X$ is:
$$
\lvert U_X \rvert = \lvert D \rvert^{\lvert V_n \rvert}=(\aleph_0)^k = \aleph_0
$$
Note that, in transfinite cardinal arithmetic, $(\aleph_0)^k = \aleph_0$ for finite $k$. In \emph{Fuzzy} $\mathit{ZF}$, $U_X$ is a well-defined set constructed by the Replacement and Power Axioms since $D$ is countable. $U_X$ contains all possible fuzzy functions from $V_n$ to $D$, including those required for $V_X$ and $A_i$ needed by $\Sigma_n$. Although $U_X$ is infinite, the model ($V_n$, $U_X$) in $\Sigma_n$ uses a finite subset of $U_X$, as detailed in the following. \end{proof}
\end{lemma}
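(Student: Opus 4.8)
The plan is to invoke Axiom~\ref{ax:existConjDif} essentially directly, and then to verify that the object it furnishes is (a) genuinely a set of \emph{Fuzzy} $\mathit{ZF}$, (b) countable, and (c) rich enough to carry a model of any finite fragment $\Sigma_n$. For (a): since \emph{Fuzzy} $\mathit{ZF}$ contains all of $\mathit{ZF}$, I would exhibit $U_X = D^{V_n}$ as $\{\, f \subseteq V_n \times D : f \text{ is a total single-valued function}\,\}$, which is obtained from $\mathcal{P}(V_n \times D)$ by Separation; Power Set together with Separation (or Replacement, as Axiom~\ref{ax:existConjDif} phrases it) therefore suffice, and no appeal to fuzzy machinery beyond the axiom itself is needed. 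This makes $(V_n, U_X)$ a well-defined two-sorted structure inside every model of \emph{Fuzzy} $\mathit{ZF}$.

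For (b): because $n$ is chosen finite, $V_n$ is hereditarily finite, so $\lvert V_n\rvert = k$ for some $k \in \omega$ with $k \ge 1$ (the hypothesis ``$n$ large enough'' excludes $k=0$), while $\lvert D\rvert = \lvert \mathbb{Q}\cap[0,1]\rvert = \aleph_0$. Cardinal arithmetic then gives $\lvert U_X\rvert = \lvert D\rvert^{\lvert V_n\rvert} = \aleph_0^{\,k} = \aleph_0$, using the standard identity $\aleph_0^{\,k}=\aleph_0$ for finite $k\ge 1$. Hence $U_X$ is countably infinite, which is what the compactness argument later requires of the ambient language.

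For (c): I would check that $U_X$ contains the specific fuzzy sets that $\Sigma_n$ forces to exist. The constant-$1$ function on $V_n$ belongs to $U_X$ and witnesses Axiom~\ref{ax:resFuzzyUniversality} (the $V_X$ of \emph{Restricted Fuzzy} Universality). For each of the finitely many stratified formulas $\Phi_i$ occurring in $\Sigma_n$, the Fuzzy Comprehension Axiom (Axiom~\ref{AxCompDifZF}), applied inside \emph{Fuzzy} $\mathit{ZF}$ over the bounded domain $X=V_n$, produces a function $A_i : V_n \to D$ sending each $x$ to the greatest $d$ with $\Phi_i(x,d)$ (or to $0$ if there is none), and by construction $A_i \in U_X$. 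Since $\Sigma_n$ mentions only finitely many such objects, the subfamily actually used in the model is the finite set $\{V_X\}\cup\{A_i\}_{i}\subseteq U_X$, even though $U_X$ itself is infinite — exactly the statement to be proved.

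The main obstacle I anticipate is not the $\mathit{ZF}$-level bookkeeping but making precise that each comprehension-defined $A_i$ is a \emph{well-defined total} function, i.e.\ that for every $x \in V_n$ the set $\{\, d \in D : \Phi_i(x,d)\,\}$ either is empty or has a greatest element, so that the disjunctive clause of Axiom~\ref{AxCompDifZF} (respectively Axiom~\ref{ax:ResFuzzStraCom}) pins down a unique value $\mu(x,A_i)$. For an arbitrary $\Phi_i$ over the dense order $D$ this can fail — the degree-extension could be an open initial segment with a supremum not attained in $D$ — so the argument must restrict to the finitely many degrees that $\Sigma_n$ can actually ``see'' (keeping each relevant $\{d:\Phi_i(x,d)\}$ finite, hence with a maximum), or equivalently relativise the maxima to that finite value-set. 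I would flag this as the delicate point and resolve it by working throughout with the finite subset of $D$ generated by the formulas of $\Sigma_n$.
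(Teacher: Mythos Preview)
Your parts (a) and (b) are exactly the paper's proof: the same Power Set/Replacement justification for $U_X=D^{V_n}$ being a set, and the identical cardinality calculation $\lvert U_X\rvert=(\aleph_0)^k=\aleph_0$. That is all the paper actually establishes in this lemma; the sentence about $V_X$ and the $A_i$ in the paper's version is only a forward reference, with the actual construction deferred to Definition~\ref{DefModeloMn} and the verification lemmas that follow it. So your part (c) over-proves the lemma by importing material the paper places later.

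Your final paragraph --- the worry that $\{d\in D:\Phi_i(x,d)\}$ may be an open initial segment of $D=\mathbb{Q}\cap[0,1]$ with no maximum, so that the disjunctive clause of Axiom~\ref{AxCompDifZF} fails to pin down a unique value --- is a legitimate concern about the framework, but it is not something the paper addresses (or even acknowledges) in this lemma. The paper's later treatment of comprehension (Lemma~3.3) simply asserts that the supremum ``is both attained and unique by construction'' without your proposed restriction to a finite value-set, so your fix is an improvement on the paper rather than a reconstruction of it. For the purposes of \emph{this} lemma, though, you can drop that discussion entirely: the statement only asks for the existence and cardinality of $U_X$.
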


\begin{Definition} \label{DefModeloMn}
Consider $M_n$ the model that satisfies:  
$$
M_n =(U_X,V_n,\mu_M,D) \quad \text {for} \quad \Sigma_n, X=V_n
$$
where:
\begin{enumerate}
\item Domain and \emph{Fuzzy} Universe.
The crisp domain, $V_n$, is finite and well-founded in $\mathit{ZF}$. The fuzzy universe is $U_X =\{\mu:X \rightarrow D\}$, and has cardinality of $\aleph_0$. We consider a finite subset $S=\{V_X,A_1,...,A_k\} \subseteq U_X$, where $V_X$ satisfies the property of universality, and each $A_i$ corresponds to a $\Phi_{i}$ in $\Sigma_{n}$ with $k=\lvert \Sigma_{n} \rvert - 2$ (excluding extensionality and universality).


\item Function $\mu_{M}$.
Where $V_X$ is: $\mu_M (x, V_X) = 1$ for all $x \in V_n \cup S$ that is definable in $U_X$ as the constant function  for all $x$, that is, $\mu (x) = 1$.
For each $A_i$ corresponding to $\Phi_i \in \Sigma_n$:
\begin{align*}
\mu_M (x, A_i) = d \Leftrightarrow (\Phi_i (x,d) \, \land \forall v \in D (v > d \rightarrow \\
 \lnot \Phi_i (x, v)) \, \land \, \exists v \in D (v \leq d \, \land \Phi_i (x, v)))
\end{align*}


\end{enumerate}
The $M_n$ model uses $S$, a finite subset of $U_X$ of size $k+1$, ensuring that $\Sigma_{n}$ is satisfied.
\end{Definition}

We verify that $M_n = (U_X,V_n, \mu_{M} ,D)$ satisfies the axioms of \emph{Restricted Fuzzy} $\mathit{NF}$ for $\Sigma_{n}$:

\begin{lemma} {Verification of \emph{Restricted Fuzzy} Extensionality Axiom.}

In the model $M_n$ constructed over \emph{Fuzzy} $\mathit{ZF}$, it holds that for all $A, B \in U_X$:
$$
\forall x \in V_n, \mu_M (x, A) = \mu_M (x,B) \rightarrow A = B
$$
\begin{proof}


Let $V_n = \{x_1, \ldots, x_k\}$ be a finite set of variables and $D$   the set of membership values.

By definition:
$$
U_X = \{\mu : V_n \rightarrow D\}
$$

is the set of all possible membership functions over the finite domain $V_n$.

Let $M_n$ be a model based on \emph{Fuzzy} $\mathit{ZF}$, where the membership relation is given by:
$$
\mu_M : V_n \times U_X \rightarrow D
$$
and represents the membership value of the element $x \in V_n$ in the set $A \in U_X$.

Let $A$ and $B$ be elements of $U_X$. By definition, both $A$ and $B$ are functions from $V_n$ to $D$. In particular:
\begin{itemize}
\item $\mu_M(x, A)$ denotes the value assigned to $x$ by the function $A$, that is, $\mu_M(x, A) = A(x)$.
\item $\mu_M(x, B)$ denotes the value assigned to $x$ by the function $B$, that is, $\mu_M(x, B) = B(x)$.
\end{itemize}

Then the hypothesis:
$$
\forall x \in V_n, \mu_M(x,A) = \mu_M (x,B)
$$
is simply translated as:
$$
\forall x \in V_n, \: A(x) = B(x)
$$
Given that $V_n$ is finite, the equality of functions can be established by verifying pointwise equality at every element of $V_n$. Thus, $A = B$ as functions; that is, they are equal as elements of $U_X$.

Formally, in set theory, two functions $f, g : X \rightarrow Y$ are equal if and only if:
$$
\forall x \in X, f(x) = g(x)
$$
This principle of extensionality for functions is justified in \emph{Fuzzy} $\mathit{ZF}$ by Axiom~\ref{ax:DiffExtensionality}.

Since $\mu_M (x, A) = \mu_M (x, B)$ for all $x \in V_n$, it follows by the axiom that $A = B$ in $U_X$. Therefore, fuzzy extensionality holds in the model $M_n$ over $U_X$, even if $U_X$ is countably infinite, because the comparison is restricted to the finite domain $V_n$, which allows for a clear application of the principle of extensionality.
\end{proof}

\end{lemma}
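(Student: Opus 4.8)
The plan is to unwind the definitions and reduce the claim to ordinary extensionality of functions, which is available inside the ambient model of \emph{Fuzzy} $\mathit{ZF}$ (equivalently, it is the content of Axiom~\ref{ax:DiffExtensionality}). Recall that, by the construction of $M_n$ in Definition~\ref{DefModeloMn}, the fuzzy universe is $U_X = \{\mu : V_n \rightarrow D\}$, so every $A \in U_X$ --- including the universal set $V_X$, which is simply the constant function with value $1$ --- is literally a function with domain $V_n$ and codomain $D$. Moreover, the interpretation of the fuzzy membership symbol in $M_n$ is fixed by $\mu_M(x,A) = A(x)$ for $x \in V_n$ and $A \in U_X$; this is coherent with the clause defining $\mu_M(x,A_i)$ in Definition~\ref{DefModeloMn}, since that clause assigns to each $x \in V_n$ exactly one value $d \in D$ and hence determines a genuine function $A_i : V_n \rightarrow D$.

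First I would fix arbitrary $A, B \in U_X$ and assume the hypothesis of the axiom, namely $\mu_M(x,A) = \mu_M(x,B)$ for every $x \in V_n$. Substituting the interpretation of $\mu_M$, this reads $A(x) = B(x)$ for every $x \in V_n$. Since $A$ and $B$ are functions with the common domain $V_n$, pointwise agreement on all of $V_n$ yields $A = B$ as sets of ordered pairs, i.e. as elements of $U_X$, by the extensionality of functions --- a theorem of $\mathit{ZF}$, and exactly what Axiom~\ref{ax:DiffExtensionality} asserts in the present setting. Hence $M_n \models \forall A, B \in U_X\,(\forall x \in V_n\,(\mu(x,A) = \mu(x,B)) \rightarrow A = B)$, which is the \emph{Restricted Fuzzy} Extensionality Axiom.

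I would then add the remark that the finiteness of $V_n = \{x_1, \dots, x_k\}$, although emphasized in the statement, is inessential to the argument: pointwise equality implies equality of functions irrespective of the cardinality of the domain, and indeed $U_X$ itself is countably infinite. Finiteness matters only in that it makes the verification completely effective --- one checks $A(x_i) = B(x_i)$ for $i = 1, \dots, k$ by finite inspection --- and it will be used later when counting the elements of $U_X$ actually invoked by $\Sigma_n$. There is essentially no obstacle here; the sole point requiring care is to keep the ontological bookkeeping clean, so that the equality ``$A = B$'' witnessed in $M_n$ is interpreted as genuine equality of the corresponding functions in the underlying \emph{Fuzzy} $\mathit{ZF}$ model, with no crisp membership relation on fuzzy sets being tacitly introduced.
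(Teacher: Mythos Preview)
Your proof is correct and follows essentially the same route as the paper: unwind $\mu_M(x,A)=A(x)$, reduce the hypothesis to pointwise equality of the functions $A,B:V_n\to D$, and invoke function extensionality (Axiom~\ref{ax:DiffExtensionality}) to conclude $A=B$. Your additional remark that the finiteness of $V_n$ is inessential to this particular step is a fair observation and is compatible with the paper's own closing comment about $U_X$ being countably infinite.
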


\begin{lemma} {Verification of \emph{Restricted Stratified Fuzzy} Comprehension } Axiom.

\begin{proof}
Since $V_n$ is finite and $D$ is countable, the set $V_n \times D$ is also countable \footnote{This conclusion follows from standard results in set theory: the Cartesian product of a finite set and a countable set retains countability.}.

Let us consider the formula:
\begin{align*}
\Psi_i(x,d) = (\Phi_i (x,d) \land \forall v \in D (v > d \rightarrow \lnot \Phi_i (x,v))) \land \\ 
\exists v \in D ( v \leq d \land \Phi_i (x,v))) \lor (d = 0 \land \forall v \in D \lnot \Phi_i (x,v))
\end{align*}
This formula is first-order, does not include any self-referential definitions, and is stratified (by hypothesis) because $\Phi_i(x,d)$ is stratified, and logical combinations of stratified formulas maintain stratification.

Let's now apply Axiom~\ref{AxCompDifZF} (\emph{Fuzzy} Comprehension) from \emph{Fuzzy} $\mathit{ZF}$. Given a first-order predicate \(\Psi_i(x,d)\), the existence of a fuzzy set \(A_i \in U_X\) is guaranteed. This means there exists a function \(A_i: V_n \rightarrow D\) such that for any \(x \in V_n\), the following holds: 

\[
\mu_M(x, A_i) = \sup \{d \in D \mid \Psi_i(x, d)\}
\]

Note that the operator \(\sup\) is not used in Axiom~\ref{ax:StrFuzzyComp}; we merely simulate its behavior within the framework of first-order logic.

However, since the definition $\Psi_i(x,d)$ selects the greatest value of $d$, among those for which $\Phi_i(x,d)$ holds, this supremum is both attained and unique by construction. Therefore, we can explicitly define:
$$
\mu_M(x,A_i) = d \Leftrightarrow \Psi_i(x,d)
$$

We conclude that \(A_i \in U_X\), which is defined by comprehension according to a stratified and first-order formula, satisfies precisely Axiom~\ref{ax:StrFuzzyComp} (\emph{Stratified Fuzzy} Comprehension). Furthermore, since \(\Sigma_n\) contains a finite number \(k\) of comprehension formulas, the total set \(S = \{V_X, A_1, \ldots, A_k\} \subseteq U_X\) has a finite cardinality. This set is sufficient to interpret all the formulas of \(\Sigma_n\) within the model \(M_n\).

With this, we complete the lemma.
\end{proof}
\end{lemma}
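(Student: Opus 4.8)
The plan is to show that for each stratified formula $\Phi_i$ occurring in the finite list $\Sigma_n$, the model $M_n = (U_X, V_n, \mu_M, D)$ of Definition~\ref{DefModeloMn} contains a witness $A_i \in U_X$ realizing the biconditional of Axiom~\ref{ax:ResFuzzStraCom}. The natural route is to derive the witness from the ambient theory: I would assemble the composite matrix $\Psi_i(x,d)$ that conjoins $\Phi_i(x,d)$ with the maximality clause $\forall v \in D\,(v>d \rightarrow \lnot\Phi_i(x,v))$ and the nonvacuity clause $\exists v \in D\,(v \leq d \wedge \Phi_i(x,v))$, disjoined with the default clause $(d=0 \wedge \forall v \in D\,\lnot\Phi_i(x,v))$, and then feed $\Psi_i$ to the \emph{Fuzzy} Comprehension schema of \emph{Fuzzy} $\mathit{ZF}$ (Axiom~\ref{AxCompDifZF}), which is available because $M_n$ is constructed inside a model of \emph{Fuzzy} $\mathit{ZF}$. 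Comprehension then yields $A_i \in U_X$ whose graph is exactly the relation $\mu_M(x,A_i)=d \Leftrightarrow \Psi_i(x,d)$, which is the required axiom instance.

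Two bookkeeping points must be discharged before this goes through. First, I would verify that $\Psi_i$ inherits stratification from $\Phi_i$: the extra material only adds bounded quantifiers $\forall v \in D$ and $\exists v \in D$ over the degree sort together with Boolean combinations, and since degree variables carry type $0$ and do not enter the $\in$-relation on which stratification is computed, no type clash is introduced. Second, I would confirm that $\Psi_i$ is not self-referential in the prohibited sense: it mentions $\mu$ only through $\Phi_i$ and never quantifies over the fuzzy set $A_i$ being defined, so the construction respects the restriction built into Axiom~\ref{AxCompDifZF}. Since $\Sigma_n$ lists only $k$ comprehension formulas, iterating over $i \leq k$ produces the finite family $S = \{V_X, A_1, \ldots, A_k\} \subseteq U_X$ already fixed in Definition~\ref{DefModeloMn}.

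The main obstacle is \textbf{well-definedness}: the formula $\Psi_i$ defines a genuine function $V_n \to D$ only if, for each fixed $x \in V_n$, the set $S_x = \{d \in D : M_n \models \Phi_i(x,d)\}$ either is empty (handled by the $d=0$ clause) or attains its maximum inside $D = \mathbb{Q} \cap [0,1]$. Because $D$ is densely ordered, this attainment is not automatic: a stratified $\Phi_i$ may define, for some $x$, an upward-open set of degrees such as $\{d : d < c\}$ whose supremum lies in $D$ yet is not realized, in which case the first disjunct of $\Psi_i$ is satisfied by no $d$ while the second fails, leaving $\mu_M(x,A_i)$ undefined. The plan to close this gap is to exploit the finiteness of the underlying data in $M_n$: since $V_n$ and $S$ are finite, only finitely many degrees $D_0 = \{\mu_M(s,t) : s \in V_n,\, t \in S\}$ are actually realized, and quantifier elimination for the dense linear order $(D,<)$ with endpoints reduces the truth of $\Phi_i(x,\cdot)$ to a finite Boolean combination of threshold conditions with parameters in $D_0 \cup \{0,1\}$, so that $S_x$ is a finite union of $D$-intervals with endpoints in this finite set. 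One must then either restrict the admissible $\Phi_i$ to those for which the top interval is right-closed, or strengthen the construction so that the witness degree is forced to be attained. Making this reduction airtight, and reconciling it with the literal biconditional of Axiom~\ref{ax:ResFuzzStraCom}, is the crux of the argument and the step I expect to demand the most care.
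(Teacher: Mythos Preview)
Your proposal follows the same route as the paper: assemble the composite formula $\Psi_i$, invoke the \emph{Fuzzy} Comprehension schema of \emph{Fuzzy} $\mathit{ZF}$ (Axiom~\ref{AxCompDifZF}) to produce the witness $A_i \in U_X$, verify that $\Psi_i$ inherits stratification and avoids self-reference, and collect the finite family $S = \{V_X, A_1, \ldots, A_k\}$. Where you go beyond the paper is in your third paragraph: you correctly flag the well-definedness obstacle---that in the dense order $D = \mathbb{Q}\cap[0,1]$ the set $\{d : \Phi_i(x,d)\}$ need not attain its maximum, so the first disjunct of $\Psi_i$ may be vacuously false while the default clause also fails---and you sketch a repair via the finiteness of the realized degree set $D_0$ and quantifier elimination for dense linear orders. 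The paper's own proof does not engage with this: it simply asserts that ``this supremum is both attained and unique by construction'' and moves on. So on this point you are more careful than the paper itself; the gap you identify is genuine, and the paper does not close it.
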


\begin{lemma} {Verification of \emph{Restricted Fuzzy} Universality Axiom.}
\label{ResFuzzyUniversality}

In the model $M_n$ constructed within \emph{Fuzzy} $\mathit{ZF}$, there exists a set $V_X \in U_X$ such that the following property holds:
\begin{align*}
(\forall x \in V_n, \: \mu_{M} (x, V_X) = 1) \land \mu_{M} (V_{X}, V_{X}) = 1
\end{align*}
\begin{proof}
Given that $V_n = \{x_1, \ldots, x_k\}$ is a finite set of variables, that the set of membership values is given by $D$, and that the set $U_X$ is well defined by Lemma 3.1, each element $A \in U_X$ can be interpreted as a fuzzy set defined over $V_n$.


We are looking for a specific element \( V_X \in U_X \) that acts as a universal set within the restricted domain \( V_n \). This element has a membership function that is constant and equal to 1 for every \( x \in V_n \). Additionally, it must satisfy the condition \( \mu_M(V_X, V_X) = 1 \).

To achieve this:
\begin{Definition}
We define the constant function \( V_X: V_n \rightarrow D \) by setting \( V_X(x) = 1 \) for every \( x \in V_n \).  
\end{Definition}

This function exists because \( 1 \in D \), and thus \( V_X \in U_X \). By construction, it follows from Lemma~\ref{ResFuzzyUniversality}.

It remains to verify that $\mu_{M}(V_{X}, V_{X}) = 1$. Recall that, in the model $M_n$, the membership function $\mu_M$ can also be defined on pairs of elements from $U_X$, provided that the extension of the action of functions $\mu \in U_X$ to other fuzzy sets is appropriately specified.

We adopt the following natural convention, which is compatible with the semantics of the model and with the axioms of \emph{Fuzzy} $\mathit{ZF}$: if $A \in U_X$ is a function $A: V_n \rightarrow D$, and we wish to define its value on a set $B \in U_X$, we do so by extending $A$ to the domain $V_n \cup S$, where $S \subseteq U_X$ is a finite set containing the relevant elements of the model, including $B$. This extension can be justified using the \emph{Fuzzy} Comprehension Axiom of \emph{Fuzzy} $\mathit{ZF}$, since it allows the construction of fuzzy subsets defined by non-bivalent predicates, such as those expressing degrees of membership $\mu(x, A) = d$.

In particular, we define $V_X(V_X) = 1$. This means that $\mu_M (V_X, V_X) = 1$, thus fulfilling the self-membership required by Axiom 3.3. Therefore, the axiom is satisfied\footnote{The construction of $V_X$ does not give rise to Russell's paradox nor does it activate the classical axioms of $\mathit{ZF}$, because, as previously noted, in \emph{Fuzzy} $\mathit{ZF}$, the expression $\mu(x, A) = 1$ is not equivalent to the classical membership relation $x \in A$. Recall that the universal set we have defined is fuzzy.}.
\end{proof}
\end{lemma}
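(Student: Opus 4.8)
The plan is to exhibit the required element of $U_X$ explicitly and then verify the two conjuncts separately, the second being where the only real work lies. First I would take $V_X \colon V_n \to D$ to be the constant function with value $1$; since $1 \in \mathbb{Q}\cap[0,1] = D$, this is a genuine function from $V_n$ to $D$, hence $V_X \in U_X$ by the Existence Axiom of \emph{fuzzy} sets (equivalently, $V_X$ is produced by \emph{Fuzzy} Comprehension applied to the formula $\Phi(x,v)\equiv(v=1)$). The first conjunct is then routine: $\mu_M(x,V_X) = V_X(x) = 1$ for every $x \in V_n$, directly from the definition of $\mu_M$ on $V_n \times U_X$ and of $V_X$.

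The substance is the self-membership clause $\mu_M(V_X,V_X)=1$, where the first argument $V_X$ lies in $U_X$ rather than in $V_n$, so it is not covered by the clause $\mu_M(x,A)=A(x)$ that defines $\mu_M$ on $V_n\times U_X$. I would handle this by enlarging the interpretation of the membership symbol: working inside the ambient model of \emph{Fuzzy} $\mathit{ZF}$, extend $\mu_M$ to a functional ternary relation on $(V_n \cup S)\times U_X$, where $S=\{V_X,A_1,\dots,A_k\}\subseteq U_X$ is the finite working set of Definition~\ref{DefModeloMn}, by stipulating in particular $\mu_M(y,V_X)=1$ for every $y\in V_n\cup S$; in effect $V_X$ is re-read as the constant function $1$ on the enlarged finite domain. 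Such an extension is itself a set of the ambient model (obtained from $V_n$, $S$ and $D$ by Replacement and Separation, and justified by \emph{Fuzzy} Comprehension), and it agrees with the original $\mu_M$ on $V_n \times U_X$; hence the verifications of \emph{Restricted Fuzzy} Extensionality and \emph{Restricted Stratified Fuzzy} Comprehension already carried out — which inspect the $A_i$ only on $V_n$ — are untouched, while the values of the $A_i$ on the finitely many new points of $S$ may be fixed by any compatible choice. From the stipulation, $\mu_M(V_X,V_X)=1$, which is the second conjunct.

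I would then close by noting the two points that might look suspicious but are not. No violation of Foundation is incurred: $\mu_M$ is interpreted as a ternary relation on the domain $U = V_n \sqcup U_X$, not as genuine $\in$-membership, so ``$V_X$ contains itself with degree $1$'' is not $V_X \in V_X$, and since \emph{Fuzzy} $\mathit{ZF}$ never identifies $\mu(x,A)=1$ with $x\in A$, the crisp $\in$ on $V_n$ stays well-founded. Likewise no Russell-type collapse occurs, because the universal \emph{fuzzy} set is introduced on a finite structure by direct stipulation rather than by an unrestricted crisp comprehension. Combining the two conjuncts gives $M_n \models (\forall x\in V_n\,\mu_M(x,V_X)=1)\wedge\mu_M(V_X,V_X)=1$, which is exactly \emph{Restricted Fuzzy} Universality.

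The main obstacle, as indicated, is purely the sort/domain mismatch in $\mu_M(V_X,V_X)$: one must argue that extending $\mu_M$ past $V_n$ in the first coordinate is both legitimate inside \emph{Fuzzy} $\mathit{ZF}$ and harmless for the axioms already verified. Everything else — exhibiting $V_X$, the first conjunct, and the final sanity checks — is bookkeeping.
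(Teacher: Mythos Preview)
Your proposal is correct and follows essentially the same approach as the paper: define $V_X$ as the constant function $1$ on $V_n$, then handle the self-membership clause by extending $\mu_M$ from $V_n$ to $V_n\cup S$ (with $S$ as in Definition~\ref{DefModeloMn}) and stipulating $\mu_M(V_X,V_X)=1$, justifying the extension via \emph{Fuzzy} Comprehension and noting that no Foundation or Russell issue arises because $\mu(\cdot,\cdot)=1$ is not identified with crisp $\in$. Your write-up is, if anything, slightly more careful than the paper's in explicitly checking that the extension does not disturb the already-verified Extensionality and Comprehension lemmas.
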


\begin{lemma} {Satisfability of finite subsets in \emph{Fuzzy} $\mathit{ZF}$}
\label{SatisFiniteSubsets}

Every finite subset $\Sigma_n \subseteq \Sigma$ of the axioms of \emph{Fuzzy} $\mathit{NF}$ is satisfiable in the language $\mathcal{L}_2$.
\begin{proof}
Let $\Sigma_n \subseteq \Sigma$ be a finite subset of axioms of \emph{Fuzzy} $\mathit{NF}$. By the construction in this section, for each $\Sigma_n$, there exists a finite model $M_n$ in \emph{Fuzzy} $\mathit{ZF}$. This model satisfies the axioms of Restricted \emph{Fuzzy} $\mathit{NF}$, which are restricted versions of the axioms in $\Sigma_n$ adapted to the finite domain $V_n$.

But a key concern arises regarding the domain restriction: in \emph{Fuzzy} $\mathit{NF}$, the axioms in $\Sigma$ 
involve universal quantifiers over a potentially infinite universe $U$, whereas in $M_n$, the domain is finite. For $M_n$ to model $\Sigma_n$, the universal quantifiers in $\Sigma_n$ must be reinterpreted over this finite domain. This reinterpretation restricts the scope of the axioms, raising the question of whether satisfying theses restricted versions is equivalent to satisfying the original axioms over $U$. 

This concern is addressed by the semantics of FOL: a model satisfies a set of formulas related to its own domain. In $M_n$, the formulas in $\Sigma_n$ are satisfied when their quantifiers range over $V_n \cup S$.
The axioms of Restricted \emph{Fuzzy} $\mathit{NF}$ are designed to align with this interpretation, ensuring that $M_n$ satisfies $\Sigma_n$ under the standard semantics of $\mathcal{L}_2$. While the axioms Restricted \emph{Fuzzy} $\mathit{NF}$ are weaker than those of \emph{Fuzzy} $\mathit{NF}$ due to the finite domain, this weakening is intentional for purpose of our proof.

The legitimacy of this approach hinges on the Compactness Theorem, which only requires that each finite subset $\Sigma_n \subseteq \Sigma$ be  satisfiable in some model. The theorem does not demand that these models satisfy the axioms in their unrestricted form over an infinite domain, nor that the models be identical across all $\Sigma_n$. Here, each $M_n$ provides a consistent interpretation of $\Sigma_n$ within its finite domain, fulfilling this requirement.

Thus, it is valid to claim that every finite subset $\Sigma_n \subseteq \Sigma$ has a model, because $M_n$ satisfies the formulas in $\Sigma_n$ when interpreted over $V_n \cup S$.
 
\end{proof}
\end{lemma}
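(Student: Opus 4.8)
The plan is to bundle the three verification lemmas of this section into a single satisfiability claim, spending most of the effort on the one delicate point: that the unrestricted quantifiers of the sentences in $\Sigma$ may legitimately be read over the finite universe of $M_n$.

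First I would fix an arbitrary finite $\Sigma_n \subseteq \Sigma$. Since $\Sigma$ is the union of the single sentence of Fuzzy Extensionality (Axiom~\ref{fuzzyExtensionality}), the single sentence of Fuzzy Universality, and the $\aleph_0$ instances of Stratified Fuzzy Comprehension (Axiom~\ref{ax:StrFuzzyComp}), $\Sigma_n$ consists of at most those two sentences together with finitely many comprehension instances $\Phi_1,\dots,\Phi_k$, where $k \le \lvert\Sigma_n\rvert$. Then I would choose $n$ in the cumulative hierarchy large enough that $V_n$ already contains witnesses for all of the finitely many bounded subformulas occurring in $\Phi_1,\dots,\Phi_k$ and absorbs the finitely many type levels used in their stratifications; the finiteness of $\Sigma_n$ makes such an $n$ available. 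With this $n$ I form $M_n = (U_X, V_n, \mu_M, D)$ as in Definition~\ref{DefModeloMn}, taking $S = \{V_X, A_1, \dots, A_k\} \subseteq U_X$ to supply the universal set and the $k$ comprehension witnesses, and I interpret the $U$-sort of $\mathcal{L}_2$ over $V_n \cup S$ and the $D$-sort over $D$.

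Next I would check the three kinds of axiom occurring in $\Sigma_n$ against $M_n$, each reducing to a lemma already proved in this section. For Fuzzy Extensionality, its interpretation in $M_n$ is precisely \emph{Restricted Fuzzy} Extensionality, since the hypothesis $\forall x\,(\mu_M(x,A) = \mu_M(x,B))$ ranges only over the finite $V_n$ and pointwise agreement of two functions $V_n \to D$ forces equality in $U_X$. For each comprehension instance attached to $\Phi_i \in \Sigma_n$, the verification lemma for \emph{Restricted Stratified Fuzzy} Comprehension shows that the $A_i \in S$ produced by Axiom~\ref{AxCompDifZF} inside \emph{Fuzzy} $\mathit{ZF}$ satisfies the required biconditional $\mu_M(x,A_i) = d \leftrightarrow \Psi_i(x,d)$ for every $x \in V_n$, so $A_i \in V_n \cup S$ witnesses the leading $\exists A$. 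For Fuzzy Universality, Lemma~\ref{ResFuzzyUniversality} gives $V_X \in S$ with $\mu_M(x,V_X) = 1$ for all $x \in V_n$ and, via the convention extending $\mu_M$ to pairs from $U_X$, also $\mu_M(V_X,V_X) = 1$; hence $V_X \in V_n \cup S$ witnesses $\exists V$. It follows that $M_n \models \Sigma_n$, and by Gödel's Completeness Theorem $\Sigma_n$ is consistent; as $\Sigma_n$ was arbitrary, every finite subset of $\Sigma$ is satisfiable.

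The main obstacle is this bundle of verifications taken together, namely justifying the passage from the unrestricted quantifiers of $\Sigma$ to quantifiers over $V_n \cup S$. The point I would make explicit is that first-order satisfaction is always relative to a model's own domain, so $M_n \models \Sigma_n$ is by definition a statement about the finite universe $V_n \cup S$, and nothing about an intended infinite universe $U$ is demanded at this stage; that is deferred to the Compactness argument, which only asks that every finite $\Sigma_n$ have \emph{some} model. What genuinely requires care inside this is (a) verifying that the defining formula $\Psi_i$ of a comprehension instance, when its set-quantifiers are read inside $V_n \cup S$ while its $D$-quantifiers are read inside the unchanged $D$, still selects the same $A_i$ — which is exactly what a sufficiently large $n$ secures — and (b) checking that the extension of $\mu_M$ to pairs from $U_X$ is total on $S$, so that the self-membership clause $\mu_M(V_X,V_X) = 1$ and the premise of Fuzzy Extensionality are meaningful on all of $V_n \cup S$.
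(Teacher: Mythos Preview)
Your proposal is correct and follows essentially the same approach as the paper: both build $M_n$ over the finite universe $V_n \cup S$, invoke the three verification lemmas for the three kinds of axioms in $\Sigma_n$, and resolve the domain-restriction worry by the observation that first-order satisfaction is always relative to a model's own domain, with the deferral of the ``intended infinite $U$'' to the Compactness step. Your write-up is somewhat more explicit than the paper's in enumerating which lemma discharges which axiom and in isolating the two technical pressure points (the stability of $\Psi_i$ under the restricted reading of set-quantifiers, and the totality of $\mu_M$ on $S$), but these are refinements of presentation, not a different argument.
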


\begin{theorem}
\label{theorem_1}
Every finite subset $\Sigma_n \subseteq \Sigma$ of axioms of \emph{Fuzzy} $\mathit{NF}$ has a finite model in \emph{Fuzzy} $\mathit{ZF}$. Specifically, for every $\Sigma_n \subseteq \Sigma$, there exists a finite model $(X,U_X)$ in \emph{Fuzzy} $\mathit{ZF}$ that satisfies $\Sigma_n$ with $D = \mathbb{Q} \cap [0,1]$.

\begin{proof}[Proof.]{\textbf{}}
The subset $S=\{V_{X},A_{1},\ldots,A_{k}\}\subseteq U_X$ used in $M_n$ is finite ($\lvert S\rvert =k+1$), although $U_X = \{ \mu : V_n \rightarrow D \}$ are countably infinite ($\left|D\right|=\left|U_{X}\right|= \aleph_0$). This allows for an explicit construction of the model $M_n = (U_{X},V_{n},\mu_{M},D)$ in \emph{Fuzzy} $\mathit{ZF}$, where $V_n$ is a finite, well-founded crisp domain by $\mathit{ZF}$. 
$U_X$ is defined by the Replacement and Power axioms in $D$, which is countable. 

Furthermore, $M_n$ is consistent with \emph{Restricted Fuzzy} $\mathit{NF}$, because $V_X \in S$ satisfies universality with respect to:
$$
V_n (\forall x \in V_{n} (\mu_{M}(x,V_{X})=1)) \land \mu_{M}(V_X,V_X) = 1)
$$
and the $A_i$ satisfy the finitely many instances of stratified comprehension in $\Sigma_n$.

Therefore, the theorem is proved for every finite subset $\Sigma_n \subseteq \Sigma$ of axioms of \emph{Fuzzy} $\mathit{NF}$, so there exists a model $M_n$ in \emph{Fuzzy} $\mathit{ZF}$ with $D=\mathbb{Q} \cap [0,1]$, where $V_n$ is finite. Additionally, \(M_n\) satisfies \(\Sigma_n\) using a finite subset of \(U_X\).
\end{proof}
\end{theorem}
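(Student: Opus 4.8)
The plan is to assemble the theorem directly from the constructions and verification lemmas already established in this section; the genuine mathematical work having been done there, what remains is to package it uniformly for an arbitrary finite $\Sigma_n$. First I would fix a finite $\Sigma_n \subseteq \Sigma$ and inventory its members: at most one instance of \emph{Fuzzy} Extensionality, at most one of \emph{Fuzzy} Universality, and finitely many stratified comprehension instances, say those for $\Phi_1,\ldots,\Phi_k$ with $k = \lvert \Sigma_n \rvert - 2$. Since each $\Phi_i$ involves only finitely many parameters and stratified comprehension produces sets drawn from a bounded region of the cumulative hierarchy, I would argue that some finite $n$ suffices for $V_n$ to contain every element needed to interpret all of $\Sigma_n$; fixing such an $n$, I set $X = V_n$, which is finite and well-founded in $\mathit{ZF}$.

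Next I would invoke Axiom~\ref{ax:existConjDif} together with the Model Construction lemma to obtain $U_X = \{\mu : V_n \to D\}$ as a set of \emph{Fuzzy} $\mathit{ZF}$, available by Power Set and Replacement with $\lvert U_X \rvert = \lvert D \rvert^{\lvert V_n \rvert} = \aleph_0$. Inside $U_X$ I single out the finite subset $S = \{V_X, A_1,\ldots,A_k\}$, where $V_X$ is the constant function with value $1$ (legitimate since $1 \in D$) and each $A_i$ is the fuzzy set produced by \emph{Fuzzy} Comprehension (Axiom~\ref{AxCompDifZF}) applied to the first-order, non-self-referential, stratified formula $\Psi_i(x,d)$ expressing that $d$ is the greatest degree with $\Phi_i(x,d)$, or $d=0$ if no such degree exists. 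I then define the structure $M_n = (U_X, V_n, \mu_M, D)$ exactly as in Definition~\ref{DefModeloMn}, extending $\mu_M$ to the finitely many pairs of $S$ required for self-membership, in particular stipulating $V_X(V_X) = 1$.

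The claim that $M_n$ models \emph{Restricted Fuzzy} $\mathit{NF}$ for $\Sigma_n$ is then a citation of the three verification lemmas: extensionality holds because functions on the finite set $V_n$ are determined pointwise (Axiom~\ref{ax:DiffExtensionality}); stratified comprehension holds because each $A_i$ realizes the required biconditional by construction of $\Psi_i$, the relevant supremum being attained and unique; and universality holds by Lemma~\ref{ResFuzzyUniversality}, since $V_X$ is constantly $1$ on $V_n$ and $\mu_M(V_X,V_X) = 1$. Finally I would appeal to Lemma~\ref{SatisFiniteSubsets} to bridge ``$M_n$ satisfies the \emph{restricted} axioms'' and ``$M_n \models \Sigma_n$'': under the standard semantics of $\mathcal{L}_2$ the universal quantifiers of the axioms in $\Sigma_n$ range over the domain $V_n \cup S$ of $M_n$, which is precisely the reading for which the restricted axioms were designed, so $M_n$ satisfies $\Sigma_n$; by Gödel Completeness $\Sigma_n$ is thus also consistent.

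I expect the main obstacle to be exactly that last bridge — defending the claim that interpreting the unrestricted axioms of $\Sigma$ over the finite domain $V_n \cup S$ yields a genuine model of $\Sigma_n$ and not merely of some strictly weaker fragment — hand in hand with pinning down ``$n$ large enough'': one must make precise why the finitely many comprehension instances of $\Sigma_n$ demand only witnesses from a bounded stage $V_n$, and check that closing $\mu_M$ under the needed pair-values (such as $\mu_M(V_X,V_X)$) introduces no further satisfaction obligations. Everything else in the argument is bookkeeping over the lemmas already proved in this section.
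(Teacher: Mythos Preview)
Your proposal is correct and follows essentially the same approach as the paper: both arguments assemble the theorem from the section's earlier constructions (Definition~\ref{DefModeloMn}, the Model Construction lemma, and the three verification lemmas), with the paper's proof being little more than a terse recap of those ingredients. If anything, your write-up is more careful---you explicitly invoke Lemma~\ref{SatisFiniteSubsets} for the restricted-to-unrestricted bridge and honestly flag the two soft spots (``$n$ large enough'' and the finite-domain reinterpretation of the $\Sigma$-axioms), whereas the paper's own proof leaves those points implicit.
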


\section{Satisfability of \emph{Fuzzy} $\mathit{NF}$ via Compactness}
\label{modinfinitos}
Our goal in this section is to prove that the entire theory of \emph{Fuzzy} $\mathit{NF}$, denoted by $\Sigma$, has an infinite model by using the Compactness Theorem of first-order logic.
This infinite model, which we will denote as $\mathcal{M}$, will serve as the basis for constructing a crisp submodel satisfying $\mathit{NF}$ in Section \ref{extracModNitido}. The central argument is that, having shown in Theorem \ref{theorem_1} that every finite subset $\Sigma_{n} \subseteq \Sigma$ has a finite model in \emph{Fuzzy} $\mathit{ZF}$, and since $\Sigma$ is a countable set of sentences, the Compactness Theorem guarantees the existence of a model for $\Sigma$.

\begin{Definition}
The Compactness Theorem in first-order logic states that if every finite subset of a countable set of sentences has a model, then the entire set has a model.
\end{Definition}

We apply this result to $\Sigma$ as follows:

\begin{lemma}{Verification of the compactness conditions of $\Sigma$ expressed in $\mathcal{L}_2$.} \label{lemaVerCondComp}
\begin{proof}
Countability of the language $\mathcal{L}_2$, which consists of the following elements:
\begin{itemize}
\item Finite logical symbols: $\forall, \exists, \rightarrow, \land, \lor, \lnot$.
\item Specific symbols: $\mu$ (fuzzy membership function), $=$ (equality), and a constant $d$ for each $d \in D$ ($\left|D\right|= \aleph_0$).
\item Variables, which are countable in number ($\aleph_0$)
\end{itemize}
The entire set of symbols is countable, because the union of a finite set and a countably infinite set ($D$) has cardinality $\aleph_0$.
The countability of $\Sigma$ follows from the fact that it contains only one sentence for extensionality and one for universality, together with one sentence for each stratified formula $\Phi_{i}(x, v)$ in $\mathcal{L}_2$ corresponding to Stratified Comprehension.

Since $\mathcal{L}_2$ is countable, the set of all formulas ($\Sigma$) is countable, and the stratified formulas form a countable subset. Thus: $\left | \Sigma \right | = \aleph_0$, and therefore, verifying the compactness conditions.

The Compactness Theorem applies because: (1) the pseudo-fuzzy logic we employ is an extended many-sorted first-order logic that preserves the fundamental semantics (since the truth values of formulas and the connectives are classical) and the deductive apparatus of first-order logic (FOL); and (2) because the set of membership degrees is countable.
\end{proof}
\end{lemma}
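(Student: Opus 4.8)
The plan is to verify, point by point, the two hypotheses needed to invoke the Compactness Theorem for $\Sigma$: first, that the ``pseudo-fuzzy'' logic in which $\Sigma$ is written is a logic to which compactness applies at all; and second, that $\Sigma$ is a set of sentences over a \emph{countable} signature. Once both are in place, compactness combines with Theorem~\ref{theorem_1} to deliver a model.

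\textbf{Step 1 (the logic is first-order).} I would first make explicit that the system is a two-sorted classical first-order logic: the sorts are the $U$-sort (sets) and the $D$-sort (membership degrees); the connectives $\wedge,\vee,\lnot,\rightarrow$ and the quantifiers $\forall,\exists$ carry their usual Boolean, Tarskian semantics; truth values of formulas are two-valued; and the only non-classical ingredient is $\mu$, treated as a ternary relation $\mu(x,A)=d$ (equivalently, a function symbol from the $U$-sort into the $D$-sort). Many-sorted first-order logic is interpretable in ordinary single-sorted first-order logic by adjoining unary sort predicates and relativising quantifiers, a translation that preserves satisfiability and finite satisfiability; hence the Compactness Theorem for single-sorted first-order logic transfers verbatim. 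Since the deductive apparatus is left untouched, the ``fuzzy'' reading of $\mu$ has no bearing on this.

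\textbf{Step 2 (countability of $\mathcal{L}_2$ and of $\Sigma$).} Next I would count symbols. $\mathcal{L}_2$ has finitely many logical symbols; its non-logical symbols are $\mu$, $=$, the order $<$ on $D$, and one constant symbol for each $d\in D=\mathbb{Q}\cap[0,1]$. Since $D$ is countable, these constants are countable, and together with the finitely many remaining symbols and the countably many variables of each sort this gives $|\mathcal{L}_2|=\aleph_0$; hence the set of all $\mathcal{L}_2$-formulas (finite strings over a countable alphabet) is countable. Then $\Sigma$ comprises one Fuzzy Extensionality sentence (Axiom~\ref{fuzzyExtensionality}), one Fuzzy Universality sentence, and one instance of Stratified Fuzzy Comprehension (Axiom~\ref{ax:StrFuzzyComp}) for each stratified formula $\Phi_i(x,v)$; the stratified formulas are a subset of all $\mathcal{L}_2$-formulas and there are infinitely many of them, so $|\Sigma|=\aleph_0$. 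This is precisely the content of the lemma.

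\textbf{Step 3 (the corollary).} As an immediate consequence, since Theorem~\ref{theorem_1} shows every finite $\Sigma_n\subseteq\Sigma$ has a model in \emph{Fuzzy} $\mathit{ZF}$ (hence a model simpliciter), compactness yields $\mathcal{M}\models\Sigma$, and $\mathcal{M}$ is infinite because $\Sigma$ forces the infinitely many constants naming elements of $D$ to be pairwise distinct. The only genuine subtlety here is conceptual rather than computational, and it lies in Step~1: one must argue carefully that the fuzziness is confined to the interpretation of $\mu$ and to the \emph{content} of sets, leaving the models as bona fide many-sorted first-order structures, so that the standard proof of compactness---via completeness, or via ultraproducts---goes through without modification. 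The counting arguments of Step~2 are routine.
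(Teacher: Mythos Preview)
Your proposal is correct and follows essentially the same approach as the paper: enumerate the symbols of $\mathcal{L}_2$ to get countability, count the axioms of $\Sigma$ (one for extensionality, one for universality, countably many comprehension instances), and observe that the logic is genuinely many-sorted first-order so that compactness applies. Your Step~1 is somewhat more explicit than the paper's in spelling out the reduction of many-sorted to single-sorted first-order logic, and your Step~3 anticipates the content of the next lemma (Lemma~\ref{lem:AppThCompac}), but the substance is the same.
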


\begin{lemma}{Application of the Compactness Theorem to obtain a model \- $\mathcal{M}$ for $\Sigma$.} 
\label{lem:AppThCompac}
\begin{proof}
Since $\Sigma$ is countable (Lemma \ref{lemaVerCondComp}) and every finite subset $\Sigma_{n} \subseteq \Sigma$ has a finite model $M_n$ in \emph{Fuzzy} $\mathit{ZF}$ (Theorem \ref{theorem_1}). According to the Compactness Theorem, there is a model $\mathcal{M} = (U, \mu_{\mathcal{M}},D)$ that satisfies, simultaneously, all sentences of $\Sigma$. Hence, $U$ is the universe of $\mathcal{M}$, including the definitions of $\mu_{\mathcal{M}}$, and with $D=\mathbb{Q} \cap [0,1]$.
\end{proof}
\end{lemma}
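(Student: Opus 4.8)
The plan is to invoke the Compactness Theorem of (many-sorted) first-order logic directly, assembling the two ingredients already in place: the countability of the language $\mathcal{L}_2$ and of $\Sigma$, and the finite satisfiability of $\Sigma$. First I would recall from Lemma~\ref{lemaVerCondComp} that the deductive and semantic apparatus in play is exactly that of classical many-sorted first-order logic — the connectives are Boolean, entailment is classical, and $\mu$ is merely a binary function symbol into the $D$-sort — so the Compactness Theorem is available without modification. I would also reiterate that $\mathcal{L}_2$ is countable (finitely many logical symbols, one constant for each element of the countable set $D$, countably many variables, and the symbol $\mu$), and that $\Sigma$ is countable (one \emph{Fuzzy} Extensionality axiom, one \emph{Fuzzy} Universality axiom, and $\aleph_0$ instances of \emph{Stratified Fuzzy} Comprehension, one per stratified $\Phi_i$).

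Next I would establish finite satisfiability. For an arbitrary finite $\Sigma_n \subseteq \Sigma$, Theorem~\ref{theorem_1}, together with Lemma~\ref{SatisFiniteSubsets}, produces a finite structure $M_n = (U_X, V_n, \mu_M, D)$ built inside \emph{Fuzzy} $\mathit{ZF}$ that satisfies $\Sigma_n$ when its quantifiers are interpreted over $V_n \cup S$; hence every finite fragment of $\Sigma$ has a model and, by G\"odel's Completeness Theorem, is consistent in the proof-theoretic sense. Compactness then yields a single structure $\mathcal{M} = (U, \mu_{\mathcal{M}}, D)$, with $D = \mathbb{Q} \cap [0,1]$, satisfying every sentence of $\Sigma$ simultaneously, which is the assertion of the lemma. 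If desired, I would additionally remark that $\mathcal{M}$ cannot be finite: for each $d \in D$ the stratified formula asserting $v = d$ produces, via \emph{Stratified Fuzzy} Comprehension and \emph{Fuzzy} Extensionality, a fuzzy set whose membership degree is constantly $d$, and these are pairwise distinct, so $|U| \geq \aleph_0$; one could even apply downward L\"owenheim--Skolem to take $\mathcal{M}$ countable, which will be convenient in Section~\ref{extracModNitido}.

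I do not expect a genuine obstacle here, since the substantive work is already carried out in Theorem~\ref{theorem_1}. The only point requiring care — and already treated in Lemma~\ref{SatisFiniteSubsets} — is the legitimacy of reinterpreting the unbounded quantifiers of the $\Sigma$-axioms over the finite domain of $M_n$: the Compactness Theorem only asks that each finite fragment have \emph{some} model, not a uniform one and not one with an infinite domain, so the domain-restriction worry dissolves once we appeal to the standard semantics of $\mathcal{L}_2$. A secondary subtlety worth flagging explicitly is that the resulting $\mathcal{M}$ need not be well-founded in the crisp sense and need not resemble any $V_n$; this is harmless because $\mu_{\mathcal{M}}$ is only a function into $D$, no Foundation-type constraint is imposed on it, and the crisp extraction of Section~\ref{extracModNitido} is designed precisely to handle such a generic $\mathcal{M}$.
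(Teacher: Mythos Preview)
Your proposal is correct and follows essentially the same approach as the paper: assemble countability of $\mathcal{L}_2$ and $\Sigma$ from Lemma~\ref{lemaVerCondComp}, finite satisfiability from Theorem~\ref{theorem_1} (and Lemma~\ref{SatisFiniteSubsets}), and invoke the Compactness Theorem directly to obtain $\mathcal{M}=(U,\mu_{\mathcal{M}},D)$. Your version is more explicit than the paper's --- you spell out why Compactness applies to this many-sorted framework, address the domain-restriction worry, and add the infiniteness and L\"owenheim--Skolem remarks --- but these are elaborations rather than a different route; the paper defers the infiniteness observation to Theorem~\ref{theorem_2} and the well-foundedness caveat to the same place.
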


Since $\mathcal{M} \vDash \Sigma$, we verify below that it satisfies the axioms of \emph{Fuzzy} $\mathit{NF}$.

\begin{lemma}{Verification of \emph{Fuzzy Extensionality} Axiom.}

In the infinite model $\mathcal{M}$, constructed via the Compactness Theorem, the axiom of Fuzzy Extensionality of \emph{Fuzzy} $\mathit{NF}$ holds:
$$
\forall A \in U \: \forall B \in U \: (\forall x \in U \, (\mu_\mathcal{M} (x, A) = \mu_{\mathcal{M}}(x, B)) \rightarrow A = B)
$$
\begin{proof}
By construction, the model $\mathcal{M}$ is a first-order model that satisfies the entire theory $\Sigma$ of \emph{Fuzzy} $\mathit{NF}$, as a direct consequence of the Compactness Theorem. In particular, Axiom \ref{fuzzyExtensionality} is satisfied.

The Axiom \ref{fuzzyExtensionality} has a clear semantic interpretation: two sets $A$ and $B$ are equal \emph{iff} their membership functions coincide on every element of the universe $V$.

The validity of this axiom in $\mathcal{M}$ is not an assumption, but rather a direct consequence of the fact that every finite subset $\Sigma_n \subseteq \Sigma$ contains it, and therefore, their finite models $M_n$ already satisfy it. 
\end{proof}
\end{lemma}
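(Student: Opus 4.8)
The plan is to derive this lemma directly from Lemma~\ref{lem:AppThCompac}. That lemma already produced a first-order structure $\mathcal{M} = (U, \mu_{\mathcal{M}}, D)$ with $\mathcal{M} \vDash \Sigma$, and the Fuzzy Extensionality sentence, Axiom~\ref{fuzzyExtensionality}, is by definition one of the countably many members of $\Sigma$. Hence $\mathcal{M}$ satisfies it, which is exactly the displayed formula once one reads off the interpretations fixed by $\mathcal{M}$: the symbol $\mu$ is interpreted as $\mu_{\mathcal{M}}$, the quantifiers $\forall A,\forall B,\forall x$ range over the domain $U$, and $=$ is the identity relation on $U$. So the substance of the argument is simply that a structure satisfies every sentence of a theory it is a model of.

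First I would restate Axiom~\ref{fuzzyExtensionality} in the form $\forall A\,\forall B\,\bigl(\forall x\,(\mu(x,A)=\mu(x,B)) \rightarrow A=B\bigr)$ and observe that substituting the interpretation given by $\mathcal{M}$ turns this verbatim into the conclusion of the lemma. Next, to make the dependence on compactness explicit, I would note that any finite $\Sigma_n \subseteq \Sigma$ containing this single extensionality sentence has a finite model $M_n$ in \emph{Fuzzy} $\mathit{ZF}$ by Theorem~\ref{theorem_1}, and $M_n$ satisfies the sentence because it satisfies all of $\Sigma_n$; since compactness (Lemma~\ref{lem:AppThCompac}) amalgamates all such fragments into the single structure $\mathcal{M}$, the extensionality sentence survives into $\mathcal{M}$.

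I would also flag the one conceptual point worth a sentence: at the finite stage the relevant axiom was \emph{Restricted Fuzzy} Extensionality, whose quantifier $\forall x$ was relativised to the finite crisp domain $V_n$, whereas in $\mathcal{M}$ the quantifier $\forall x \in U$ is genuinely unrestricted, matching the intended axiom of \emph{Fuzzy} $\mathit{NF}$. This upgrade from the relativised form to the full form is precisely the payoff of having invoked compactness on the unrestricted sentences of $\Sigma$ rather than attempting to satisfy them all inside one finite model.

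I do not expect a real obstacle here. The only items needing a word of care are standard features of first-order logic with equality that were already secured in Lemma~\ref{lem:AppThCompac}: that $\mathcal{M}$ interprets $=$ as true identity on $U$, so the conclusion $A=B$ is literal equality and not a mere internal equivalence, and that $\mu_{\mathcal{M}}$ is a total function into $D$, so that both $\mu_{\mathcal{M}}(x,A)$ and $\mu_{\mathcal{M}}(x,B)$ are always defined. Given these, the verification is immediate.
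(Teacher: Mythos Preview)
Your proposal is correct and follows essentially the same approach as the paper: both argue that since $\mathcal{M} \vDash \Sigma$ by compactness (Lemma~\ref{lem:AppThCompac}) and the Fuzzy Extensionality sentence is a member of $\Sigma$, the axiom holds in $\mathcal{M}$, with the finite models $M_n$ already witnessing it at each stage. Your additional remarks on the passage from the relativised to the unrestricted quantifier and on the interpretation of equality are welcome clarifications, but they do not constitute a different strategy.
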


\begin{lemma}{Verification of \emph{Stratified Fuzzy} Comprehension Axiom.} \label{lemmaNFCompDifunrestricted}

Let $\mathcal{M}$ be a model such that for every stratified formula $\Phi_i(x,d)$ with $x \in U$ and $d \in D$, there exists a fuzzy set $A_i \in U$ that satisfies Axiom~\ref{ax:StrFuzzyComp} for all $x \in U$.
\begin{proof}
As shown in Lemma~\ref{lem:AppThCompac}, each finite subset $\Sigma_n \subseteq \Sigma$ has a finite model $M_n$ that satisfies \emph{Restricted Fuzzy} $\mathit{NF}$. Therefore, the model satisfies Axiom~\ref{ax:ResFuzzStraCom}

By the Compactness Theorem, we can assert that $\mathcal{M}$ satisfies this property for all formulas in $\Sigma$ without exception.
\end{proof}
\end{lemma}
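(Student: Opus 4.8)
The plan is to obtain the statement as an immediate consequence of two facts already in place. First, by Lemma~\ref{lem:AppThCompac}, the structure $\mathcal{M} = (U, \mu_{\mathcal{M}}, D)$ delivered by the Compactness Theorem is a model of the \emph{whole} theory $\Sigma$. Second, by the description of $\Sigma$ given at the end of Section~\ref{Definitions}, every single instance of \emph{Stratified Fuzzy} Comprehension (Axiom~\ref{ax:StrFuzzyComp}) --- one first-order sentence $\sigma_i$ for each stratified formula $\Phi_i$ --- is literally an element of $\Sigma$. So there is essentially nothing to construct inside $\mathcal{M}$: the required witnesses are handed to us directly by satisfaction.

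Concretely I would proceed as follows. Fix an arbitrary stratified $\mathcal{L}_2$-formula $\Phi_i(x,v)$ and let $\sigma_i \in \Sigma$ be the corresponding comprehension instance, i.e. the sentence $\exists A\,\forall x\,\forall d \in D\,(\mu(x,A)=d \leftrightarrow \theta_i(x,d))$, where $\theta_i$ is the maximal-degree biconditional body appearing in Axiom~\ref{ax:StrFuzzyComp}. Since $\mathcal{M} \models \Sigma$ we have $\mathcal{M} \models \sigma_i$; unwinding the semantics of $\models$ in the many-sorted structure $\mathcal{M}$, this says precisely that there exists $A_i \in U$ with $\mu_{\mathcal{M}}(x,A_i) = d$ iff $\theta_i(x,d)$ holds in $\mathcal{M}$, for all $x \in U$ and all $d \in D$. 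That is exactly the conclusion asserted by the lemma for this $\Phi_i$, and since $\Phi_i$ was arbitrary, Axiom~\ref{ax:StrFuzzyComp} holds in $\mathcal{M}$. I would also note --- a purely syntactic remark that costs nothing --- that stratification is a property of the very same $\Phi_i$ that already occur in the finite fragments, so the schema transfers verbatim from the $\Sigma_n$ to $\Sigma$ with no new typing assignment to check.

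The one point that genuinely needs care, and which I expect to be the main obstacle, is the gap between what the finite models verify and what the sentences $\sigma_i$ assert: in Section~\ref{modfinitos} the $M_n$ were only shown to satisfy \emph{Restricted Fuzzy Stratified} Comprehension (Axiom~\ref{ax:ResFuzzStraCom}), whose quantifiers are relativized to the finite crisp domain $V_n$, whereas $\sigma_i$ carries unrestricted quantifiers ranging over $U$. I would dispatch this exactly as in Lemma~\ref{SatisFiniteSubsets}: first-order satisfaction always evaluates quantifiers over the model's own domain, so the finite structure $M_n$, whose $U$-sort is $V_n \cup S$, does satisfy the unrestricted sentence $\sigma_i$ \emph{internally} for each $\sigma_i$ occurring in $\Sigma_n$; consequently every finite $\Sigma_n \subseteq \Sigma$ is genuinely satisfiable as a set of (unrestricted) $\mathcal{L}_2$-sentences, the hypotheses of Compactness are met (Lemma~\ref{lemaVerCondComp}), and the model $\mathcal{M}$ it produces inherits \emph{all} of $\Sigma$ --- in particular every comprehension instance --- over its full universe $U$, however large $U$ is. With that clarification the proof is complete; no further construction inside $\mathcal{M}$ is required.
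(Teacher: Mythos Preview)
Your proof is correct and follows essentially the same approach as the paper: both argue that $\mathcal{M} \models \Sigma$ (via Lemma~\ref{lem:AppThCompac} and Compactness) and that every instance of Axiom~\ref{ax:StrFuzzyComp} is literally a member of $\Sigma$, so the witnesses $A_i$ are supplied directly by satisfaction. Your treatment is in fact more careful than the paper's two-line version, since you explicitly flag and resolve the restricted-versus-unrestricted quantifier gap by invoking Lemma~\ref{SatisFiniteSubsets}; the paper's proof leaves that point implicit here, having dealt with it earlier.
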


\begin{lemma}{Verification of \emph{Fuzzy} Universality Axiom.}
\label{lemmNFUniDifunrestricted}

In the model $\mathcal{M}$ of \emph{Fuzzy} $\mathit{NF}$, there exists a set $V_\mathcal{M} \in U$ such that:
$$ \forall x \in U \, (\mu_{\mathcal{M}}(x, V_{\mathcal{M}}) = 1 \land \mu_{\mathcal{M}}(V_{\mathcal{M}},V_{\mathcal{M}}) = 1)
$$
\begin{proof}
As demonstrated in Lemma~\ref{ResFuzzyUniversality}, every finite set of variables $V_n \subseteq U$ admits, within \emph{Fuzzy} $\mathit{ZF}$, a restricted universal set $V_X \in U_X$ such that:
$$
\forall x \in V_n (\mu (x, V_X) = 1 \land \mu (V_X, V_X) = 1)
$$
In other words, Axiom~\ref{ax:resFuzzyUniversality} is satisfiable within each finite model $M_n$ of \emph{Fuzzy} $\mathit{ZF}$ (by application of Lemma~\ref{ResFuzzyUniversality}).

Since the Axiom~\ref{ax:resFuzzyUniversality} is a generalization of its finite version, the Compactness Theorem guarantees the existence of an infinite model $\mathcal{M} \vDash \Sigma$ that satisfies it.
This follows because, by the Compactness Theorem, if every finite subset of the theory $\Sigma$ (which includes the finite instances of Axiom~\ref{ax:resFuzzyUniversality}) is satisfiable, then the entire theory $\Sigma$ is also satisfiable, ensuring the existence of a model-possibly infinite-in which Axiom 3.3 holds.
\end{proof}
\end{lemma}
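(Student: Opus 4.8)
The plan is to obtain this lemma directly from $\mathcal{M} \vDash \Sigma$ (Lemma~\ref{lem:AppThCompac}), with no fresh construction: the content is entirely bookkeeping on top of the compactness argument. First I would note that \emph{Fuzzy} Universality is a single closed formula of $\mathcal{L}_2$,
$$
\sigma_U := \exists V \, \bigl( \forall x \in U \, (\mu(x,V) = 1) \land \mu(V,V) = 1 \bigr),
$$
and that $\sigma_U \in \Sigma$ by definition of $\Sigma$. So the whole task reduces to (a) checking that the Compactness hypotheses are met for the finite fragments that contain $\sigma_U$, and (b) reading off the witness from $\mathcal{M} \vDash \sigma_U$.

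For (a): given any finite $\Sigma_n \subseteq \Sigma$ with $\sigma_U \in \Sigma_n$, Theorem~\ref{theorem_1} supplies a finite model $M_n = (U_X, V_n, \mu_M, D)$ in \emph{Fuzzy} $\mathit{ZF}$, and Lemma~\ref{ResFuzzyUniversality} exhibits the constant function $V_X \colon V_n \to D$, $V_X(x)=1$, as an element of $U_X$ satisfying \emph{Restricted Fuzzy} Universality (Axiom~\ref{ax:resFuzzyUniversality}); moreover the construction of Definition~\ref{DefModeloMn} fixes $\mu_M(V_X,V_X)=1$. By Lemma~\ref{SatisFiniteSubsets}, this restricted version is exactly what is needed for $M_n \vDash \Sigma_n$ under the standard many-sorted semantics of $\mathcal{L}_2$, since in $M_n$ the quantifier $\forall x$ of $\sigma_U$ ranges over $M_n$'s own finite domain $V_n \cup S$, on all of which $V_X$ returns the value $1$. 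Hence $M_n \vDash \sigma_U$, and every finite $\Sigma_n$ is satisfiable.

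For (b): Lemma~\ref{lem:AppThCompac} then yields the model $\mathcal{M} = (U, \mu_{\mathcal{M}}, D)$ with $\mathcal{M} \vDash \Sigma$, so in particular $\mathcal{M} \vDash \sigma_U$. Unwinding the semantics of the existential, there is an element of $U$ — call it $V_{\mathcal{M}}$ — witnessing $\sigma_U$, and by the definition of satisfaction this says precisely that $\mu_{\mathcal{M}}(x, V_{\mathcal{M}}) = 1$ for every $x \in U$ and $\mu_{\mathcal{M}}(V_{\mathcal{M}}, V_{\mathcal{M}}) = 1$, which is the assertion of the lemma.

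I do not expect a genuine obstacle. The one point that deserves care — and the only place the argument could reasonably be challenged — is the passage from \emph{Restricted Fuzzy} Universality in the finite $M_n$ (where the quantifier is $\forall x \in V_n$) to $\sigma_U$ itself (where it is $\forall x \in U$). This is precisely the issue settled by Lemma~\ref{SatisFiniteSubsets}: the quantifiers of $\sigma_U$ are interpreted over $M_n$'s own domain $V_n \cup S$, and $V_X$ was built so that $\mu_M(\,\cdot\,, V_X) \equiv 1$ on all of $V_n \cup S$, including on $V_X$. Once that is granted, compactness does the rest, and the possible non-standardness of $\mathcal{M}$ is irrelevant, since satisfaction of $\sigma_U$ is by definition insensitive to which element of $U$ plays the role of the witness $V_{\mathcal{M}}$.
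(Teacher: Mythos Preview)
Your proposal is correct and follows essentially the same route as the paper: both arguments invoke Lemma~\ref{ResFuzzyUniversality} to secure the restricted universal set in each finite $M_n$, and then appeal to Compactness (via Lemma~\ref{lem:AppThCompac}) to conclude that $\mathcal{M} \vDash \sigma_U$. Your write-up is in fact more explicit than the paper's on the one delicate point---namely, the reinterpretation of the unrestricted quantifier $\forall x$ of $\sigma_U$ over the finite domain $V_n \cup S$---which you correctly ground in Lemma~\ref{SatisFiniteSubsets}, whereas the paper leaves this passage implicit.
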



\begin{theorem}
\label{theorem_2}
The $\Sigma$ theory of \emph{Fuzzy} $\mathit{NF}$ has an infinite model with $D= \mathbb{Q} \cap [0,1]$.

\begin{proof}
By the above lemmas, we began with \emph{Fuzzy} $\mathit{ZF}$ to construct finite models $M_n$ that satisfy every finite subset $\Sigma_n \subseteq \Sigma$ of \emph{Fuzzy} $\mathit{NF}$ (Theorem~\ref{theorem_1}). By applying the Compactness Theorem for first-order logic, we extend this partial satisfiability to an infinite model $\mathcal{M} = (U, \mu_{\mathcal{M}}, D)$ that simultaneously satisfies all sentences in $\Sigma$. This model $\mathcal{M}$ is not necessarily well-founded, nor does it coincide with $\mathit{ZF}$ or \emph{Fuzzy} $\mathit{ZF}$; rather, it is consistent with \emph{Fuzzy} $\mathit{NF}$, as it does not impose such restrictions and includes all instances of stratified comprehension. Furthermore, since both the language $\mathcal{L}_2$ and $\Sigma$ are countable (Lemma~\ref{lemaVerCondComp}) and every $\Sigma_n$ is satisfiable, it follows that $\Sigma$ has an infinite model $\mathcal{M}$ satisfying \emph{Fuzzy} $\mathit{NF}$. Therefore, the theorem is proved.
\end{proof}
\end{theorem}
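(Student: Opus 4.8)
The plan is to assemble the pieces already proved in Sections~\ref{modfinitos} and~\ref{modinfinitos} into the single statement of Theorem~\ref{theorem_2}. First I would recall the two ingredients that make the Compactness Theorem applicable: by Lemma~\ref{lemaVerCondComp}, the language $\mathcal{L}_2$ and the theory $\Sigma$ are countable, since $\Sigma$ consists of the single Fuzzy Extensionality axiom, the single Fuzzy Universality axiom, and one Stratified Fuzzy Comprehension instance for each of the $\aleph_0$ stratified formulas $\Phi_i$; and by Theorem~\ref{theorem_1}, every finite subset $\Sigma_n \subseteq \Sigma$ has a (finite) model $M_n$ built inside \emph{Fuzzy} $\mathit{ZF}$. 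By Gödel's Completeness Theorem the existence of these models gives finite satisfiability of $\Sigma$.

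Next I would invoke the Compactness Theorem in the form stated before Lemma~\ref{lemaVerCondComp}: a countable set of first-order sentences all of whose finite subsets are satisfiable is itself satisfiable. This is legitimate here because, as argued in Lemma~\ref{lemaVerCondComp}, the pseudo-fuzzy logic in play is an extended many-sorted first-order logic whose connectives and truth values are classical and whose deductive apparatus is that of ordinary FOL; hence Compactness applies verbatim. This yields a structure $\mathcal{M} = (U, \mu_{\mathcal{M}}, D)$ with $D = \mathbb{Q} \cap [0,1]$ such that $\mathcal{M} \vDash \Sigma$. I would then point out that $\mathcal{M}$ is necessarily infinite: the Fuzzy Universality axiom forces a set $V_{\mathcal{M}}$ with $\mu_{\mathcal{M}}(V_{\mathcal{M}}, V_{\mathcal{M}}) = 1$, and together with the countably many comprehension instances (which produce, among others, analogues of the singletons, so the universe cannot collapse to a finite size) the model cannot be finite; alternatively one simply observes that the finite $M_n$ grow without bound, so any compactness model of the full $\Sigma$ must be infinite.

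Then I would record that $\mathcal{M}$ satisfies exactly the axioms of \emph{Fuzzy} $\mathit{NF}$ — Fuzzy Extensionality, Stratified Fuzzy Comprehension, and Fuzzy Universality — which is precisely the content of the three preceding verification lemmas (the lemmas after Lemma~\ref{lem:AppThCompac}, Lemma~\ref{lemmaNFCompDifunrestricted}, and Lemma~\ref{lemmNFUniDifunrestricted}); each holds in $\mathcal{M}$ because it, or its finite restriction, lies in every sufficiently large $\Sigma_n$ and hence in $M_n$, and Compactness transports it to $\mathcal{M}$. Finally I would note that $\mathcal{M}$ need not be well-founded and need not be a model of $\mathit{ZF}$ or \emph{Fuzzy} $\mathit{ZF}$; this is expected and harmless, since \emph{Fuzzy} $\mathit{NF}$ imposes no foundation restriction, and it is exactly this lack of well-foundedness that will later permit the extraction of a crisp $\mathit{NF}$ model in Section~\ref{extracModNitido}.

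The main obstacle is not any single calculation but the conceptual point that Compactness may legitimately be applied to this "fuzzy" logic at all: one must be careful that the graded membership predicate $\mu$ does not break the classical semantics on which Compactness rests. I would address this exactly as Lemma~\ref{lemaVerCondComp} does — by emphasizing that $\mu(x,A)=d$ is, syntactically, an ordinary ternary relation symbol in a many-sorted first-order language, that the connectives and quantifiers retain their classical Boolean/Tarskian interpretation, and that "fuzziness" lives only in the \emph{content} of the sets, never in the logic — so that the standard Compactness and Completeness theorems apply without modification. A secondary subtlety is merely bookkeeping: checking that no individual $\Sigma_n$ requires an instance outside $\Sigma$, which is immediate since each $M_n$ was built in Theorem~\ref{theorem_1} to satisfy precisely the restricted versions of the axioms occurring in $\Sigma_n$.
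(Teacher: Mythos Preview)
Your proposal is correct and follows essentially the same approach as the paper: assemble the countability of $\mathcal{L}_2$ and $\Sigma$ (Lemma~\ref{lemaVerCondComp}) together with the finite satisfiability established in Theorem~\ref{theorem_1}, then invoke Compactness to obtain $\mathcal{M} \vDash \Sigma$, and finally observe that $\mathcal{M}$ need not be well-founded. Your additional remarks---the explicit justification that Compactness applies because $\mu$ is just an ordinary relation symbol in a many-sorted classical FOL, and the attempt to argue that $\mathcal{M}$ must be infinite---go slightly beyond what the paper's proof spells out, though note that your ``the finite $M_n$ grow without bound'' argument does not by itself force the compactness model to be infinite; only an argument from within $\Sigma$ (e.g., that comprehension yields infinitely many extensionally distinct sets) does that.
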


\section{Extraction of a crisp submodel.}
\label{extracModNitido}
In this section, we construct a crisp submodel $\mathrm{N}$ from the infinite model $\mathcal{M}$ of \emph{Fuzzy} $\mathit{NF}$ ($\Sigma$), such that $\mathrm{N}$ satisfies Extensionality and  Stratified Comprehension axioms of $\mathit{NF}$, in addition to the existence of a universal set $V_n$ with $V_n \in V_n$. A crisp model is one where the fuzzy membership function $\mu$ takes only values of $\{0,1\}$, eliminating \emph{fuzziness}. The proof of relative consistency is now complete; thus, if $\mathit{ZF}$ is consistent, then \emph{Fuzzy} $\mathit{ZF}$ is consistent (by its axiomatic extension); if \emph{Fuzzy} $\mathit{ZF}$ is consistent, then \emph{Fuzzy} $\mathit{NF}$ is consistent (Theorems \ref{theorem_1} and \ref{theorem_2}); and if \emph{Fuzzy} $\mathit{NF}$ is consistent, then $\mathit{NF}$ is also consistent, since $\mathrm{N}$ is a model of $\mathit{NF}$ derived from $\mathcal{M}$ by compactness and this construction.
\begin{Definition} \label{step61}
We define the crisp universe of $\mathrm{N}$ as:
$$
U_{\mathrm{N}} = \{A \in U \mid \forall x \in U \, (\mu_{\mathcal{M}}(x, A) \in \{0,1\})\}
$$
where $U$ is the universe of $\mathcal{M}=(U, \in_{\mathcal{M}},\mu_{\mathcal{M}},D)$. Thus, $U_{\mathrm{N}} \subseteq U$ contains only those elements of $\mathcal{M}$, which fuzzy membership is binary (\emph{Crisp set}).

The structure of $\mathrm{N}$ is:
$$
\mathrm{N}=(U_{\mathrm{N}},\in_{\mathcal{M}})
$$
where the classical membership relation is defined by:
$$
x \in_\mathrm{N} A \Leftrightarrow \mu_{\mathcal{M}}(x,A)=1, \text{ for  } \, x,A \in U_{\mathrm{N}}
$$
\end{Definition}


The fuzzy function $\mu_\mathcal{M}$ turns into a binary relation $\in_\mathcal{M}$ by filtering $\mathcal{M}$, resulting in a classical structure that meets the axioms of \emph{Classic} $\mathit{NF}$, as shown below.

\begin{lemma}{Verification of Extensionality Axiom.}
\begin{proof}
Let $A,B \in U_{\mathrm{N}}$ be such that $\forall x \in \, U_{\mathrm{N}} (x \in_{\mathrm{N}} A \leftrightarrow x \in_{\mathrm{N}} B)$.

By definition, this means:
$$
\forall x \in \, U_{\mathrm{N}} (\mu_{\mathcal{M}}(x, A) = 1 \leftrightarrow (\mu_{\mathcal{M}}(x, B) = 1)
$$
Since $A,B \in U_\mathrm{N}$, we have that $\mu_{\mathcal{M}}(x, A),\mu_{\mathcal{M}}(x, B) \in \{0,1\}$ for all $x \in U$, and the hypothesis implies:
$$
\forall x \in U_{\mathrm{N}} (\mu_{\mathcal{M}}(x, A) = \mu_{\mathcal{M}}(x, B))
$$
In $\mathrm{N}$, which is a classic model with universe $U_\mathrm{N}$, the elements of sets in $U_\mathrm{N} $ determine their unique identity according to the relation $\in_{\mathrm{N}}$. Since $A$ and $B$ overlap in all $x \in U_{\mathrm{N}}$ with respect to $\in_{\mathrm{N}}$, they are equal in $\mathrm{N}$, regardless of their behavior outside $U_{\mathrm{N}}$. 
But it is possible for two sets \( A \) and \( B \) in \( U_{\mathrm{N}} \) to have exactly the same members under the membership relation \( A, B \in U_{\mathrm{N}} \) (i.e., \(\forall x \, \mu(x, A) = \mu(x, B), \, x \in U_{\mathrm{N}}\)), however not be considered equal as elements of the model \(\mathcal{M}\). This situation can arise if the sets differ in their membership values with respect to elements outside \( U_{\mathrm{N}} \). As a result, even though \( A \) and \( B \) contain the same elements within the framework of the model \(\mathrm{N}\), the equality \( A = B \) does not hold. This scenario would violate the Axiom of Extensionality.

This issue can be resolved using the extensional quotient by applying equivalence relations. 
\begin{Definition}
We define an equivalence relation, denoted as $\sim$, on the set $U_\mathrm{N}$ according to the rule: $A \sim B$, if and only if $\mu(x, A) = \mu(x, B)$.
\end{Definition}
In other words, two sets are considered equivalent if they contain the same members in a crisp sense.

This allows us to identify sets that, although distinct in $\mathcal{M}$, are indistinguishable from the perspective of the crisp model.

\begin{Definition}
The universe of the model \(\mathrm{N}\) is defined as the set of equivalence classes \(U_{\mathrm{N}}/{\sim}\), which provides the domain for the new extensional model.
\end{Definition}
\begin{Definition}
The membership relation for these equivalence classes is defined as follows:
\(x \in_{\mathrm{N}} [A]\) if and only if \(\mu(x, A) = 1\), 
where \(A \in_{\mathrm{N}} U_{\mathrm{N}}\) is any representative of the class \([A]\).
\end{Definition}
Once this quotient model $\mathrm{N} = (U_\mathrm{N}/{\sim}, \in_\mathrm{N})$ has been constructed, the Axiom of Extensionality is satisfied automatically and trivially, since if two equivalence classes $[A]$, $[B]$ have the same elements in the sense of $\in_\mathrm{N}$, then their representatives $A$ and $B$ satisfy $\mu(x, A) = \mu(x, B)$ for all $x \in U_\mathrm{N}$, so $A \sim B$ and therefore $[A] = [B]$.
\end{proof}
\end{lemma}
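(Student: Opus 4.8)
The plan is to reduce the Axiom of Extensionality for $\mathrm{N}$ to a statement purely about the fuzzy membership function $\mu_{\mathcal{M}}$, and then to close the residual gap by passing to an extensional quotient. First I would unwind the definitions. Since $\mathrm{N}$ is crisp and $x \in_{\mathrm{N}} A \Leftrightarrow \mu_{\mathcal{M}}(x,A)=1$, and since every $A \in U_{\mathrm{N}}$ has $\mu_{\mathcal{M}}(\cdot,A)$ valued in $\{0,1\}$ over all of $U$, the hypothesis $\forall x \in U_{\mathrm{N}}\,(x \in_{\mathrm{N}} A \leftrightarrow x \in_{\mathrm{N}} B)$ is equivalent to $\mu_{\mathcal{M}}(x,A)=\mu_{\mathcal{M}}(x,B)$ for every $x \in U_{\mathrm{N}}$. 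The goal is then to derive $A=B$ in the model $\mathrm{N}$ from this pointwise agreement over the crisp universe.

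Next I would confront the obstacle directly. Fuzzy Extensionality in $\mathcal{M}$ (Axiom~\ref{fuzzyExtensionality}) only yields $A=B$ when the two membership functions coincide on the \emph{entire} universe $U$. Since $U_{\mathrm{N}} \subseteq U$ may be a proper subclass, two elements $A,B \in U_{\mathrm{N}}$ can agree on every crisp $x \in U_{\mathrm{N}}$ while still differing at some non-crisp $y \in U \setminus U_{\mathrm{N}}$ (being crisp constrains the \emph{values} of $A,B$ to $\{0,1\}$, not the location of their members). In that case $A=B$ fails in $\mathcal{M}$, so $\mathrm{N}$ taken as a literal substructure of $\mathcal{M}$ need not satisfy Extensionality. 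This is precisely the failure the quotient is designed to repair.

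Finally I would build the extensional quotient. Define $A \sim B$ iff $\mu_{\mathcal{M}}(x,A)=\mu_{\mathcal{M}}(x,B)$ for all $x \in U_{\mathrm{N}}$; reflexivity, symmetry and transitivity are immediate from the corresponding properties of equality of $D$-values, so $\sim$ is an equivalence relation on $U_{\mathrm{N}}$. I would then take as the universe of $\mathrm{N}$ the quotient $U_{\mathrm{N}}/{\sim}$ and set $[x] \in_{\mathrm{N}} [A]$ iff $\mu_{\mathcal{M}}(x,A)=1$. Extensionality is then immediate by construction: if $[A]$ and $[B]$ have the same $\in_{\mathrm{N}}$-members, their representatives satisfy $\mu_{\mathcal{M}}(x,A)=\mu_{\mathcal{M}}(x,B)$ for all $x \in U_{\mathrm{N}}$, whence $A \sim B$ and therefore $[A]=[B]$. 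The step I expect to be the main obstacle is the \emph{well-definedness} of this class-membership relation: independence from the choice of representative $x$ in $[x]$ requires that $\sim$ be a congruence for $\in_{\mathrm{N}}$, i.e.\ that $x \sim x'$ imply $\mu_{\mathcal{M}}(x,A)=\mu_{\mathcal{M}}(x',A)$ for every $A \in U_{\mathrm{N}}$, and this does \emph{not} follow from top-level agreement alone. To secure it I would refine $\sim$ to the largest bisimulation on $(U_{\mathrm{N}},\in_{\mathrm{N}})$, an extensional collapse in the style of a Mostowski quotient adapted to the possibly ill-founded $\mathcal{M}$, for which the congruence (and hence well-definedness) holds by maximality; extensionality of the collapsed model then follows exactly as above.
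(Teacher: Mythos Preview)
Your proposal tracks the paper's argument step for step: both unwind $\in_{\mathrm{N}}$ in terms of $\mu_{\mathcal{M}}$, both isolate the same obstruction (agreement over $U_{\mathrm{N}}$ need not force equality in $\mathcal{M}$, since $A,B$ may differ on $U\setminus U_{\mathrm{N}}$), and both resolve it by passing to the quotient $U_{\mathrm{N}}/{\sim}$ with $A\sim B$ iff $\mu_{\mathcal{M}}(x,A)=\mu_{\mathcal{M}}(x,B)$ for all $x\in U_{\mathrm{N}}$.

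Where you go beyond the paper is in your final paragraph. You flag well-definedness of the quotient membership in the \emph{first} coordinate as the main residual issue and propose refining $\sim$ to the largest bisimulation on $(U_{\mathrm{N}},\in_{\mathrm{N}})$, i.e.\ performing an extensional collapse. The paper does not address this at all: its definition reads $x\in_{\mathrm{N}}[A]$ with an unquotiented left argument and checks only independence of the representative $A$, then silently switches to $[x]\in_{\mathrm{N}}[A]$ in the subsequent lemmas. Your concern is therefore legitimate, and the bisimulation repair is the standard one; this is a genuine strengthening of the paper's construction rather than a different route. One small caveat on your formulation: the maximal bisimulation does not literally yield $x\sim x'\Rightarrow \mu_{\mathcal{M}}(x,A)=\mu_{\mathcal{M}}(x',A)$ for a fixed $A$ (bisimilarity constrains elements, not containers), so the collapsed membership should be defined as $[x]\in_{\mathrm{N}}[A]$ iff some $x'\sim x$ has $\mu_{\mathcal{M}}(x',A)=1$; with that adjustment well-definedness in both coordinates and extensionality follow from maximality exactly as you indicate.
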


\begin{Definition} \label{def:auxiliary}
Consider $\Phi_i (x,y)$ in $\mathcal{M}$ as an auxiliary formula. Let $\Phi(x)$ be a stratified formula in the language of $\mathit{NF}$. In $\mathcal{M}$, it operates jointly a membership function $\mu_{\mathcal{M}} : U \times U \rightarrow D$ (where $D = \mathbb{Q} \cap [0,1]$), its definition is:
$$
\Phi_i (x,y) = (y=1 \land \Phi(x))
$$
where $y$ is a \emph{type 0} variable, with values in $D$.
$\Phi (x)$ maintains its stratification because the types assigned to the variables in $\Phi (x)$ (according to the definition of stratification in $\mathit{NF}$) are not altered by adding the condition $y=1$. 
\end{Definition}

\begin{lemma} {Verification of Stratified Comprehension Axiom.}

Let $\Psi(x)$ be a stratified formula in the classical language with $\in$. Then there exists a set $[A] \in U_\mathrm{N}/\sim$ such that in the crisp model $\mathrm{N}$, the following holds:
$$
x \in_\mathrm{N} [A] \leftrightarrow \Psi(x)
$$

\begin{proof}
The fuzzy model $\mathcal{M}$ satisfies the Stratified Fuzzy Comprehension Axiom (Lemma \ref{lemmaNFCompDifunrestricted}). Given that $\Psi(x)$ is a classical stratified formula, we define an auxiliary formula to adapt it to Lemma~\ref{lemmaNFCompDifunrestricted}:
$$
\Phi (x, d) = (d = 1 \land \Psi (x))
$$
Furthermore, this $\Phi$ is stratified in $\mathcal{M}$.

The potential cases are as follows:
\begin{itemize}
     \item If $\Psi(x)$ is true, then $\Phi(x, 1)$ is true, and since there is no $v > 1$ in $D$, the left-hand side of the biconditional in Axiom \ref{ax:ResFuzzStraCom} for $d = 1$ is satisfied. Therefore, $\mu(x, A) = 1$.
     \item If $\Psi(x)$ is false, then $\Phi(x, d)$ is false for every $d \in D$, and by the second disjunct of the single disjunction of Axiom \ref{ax:StrFuzzyComp}, it follows that $\mu(x, A) = 0$.
\end{itemize}
Therefore:
\[
\mu_\mathcal{M}(x,A) = \left\{
\begin{array}{l}
1 \: \: \text{   if   } \: \: \Psi(x), \\
0 \: \: \text{   if   } \: \: \lnot \Psi(x).
\end{array}
\Rightarrow (x \in_\mathrm{N} [A] \leftrightarrow \mu (x,A)=1 \leftrightarrow \Psi (x))
\right.
\]

Since $\mu(x, A) \in \{0, 1\}$, it follows that $A \in U_\mathrm{N}$, and therefore $[A] \in U_\mathrm{N}/\sim$. Thus, the model $\mathrm{N}$ satisfies the Stratified Comprehension Axiom.

\end{proof}
\end{lemma}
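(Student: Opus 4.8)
The plan is to push the Stratified Fuzzy Comprehension already established for $\mathcal{M}$ (Lemma~\ref{lemmaNFCompDifunrestricted}) down to the crisp quotient $\mathrm{N}$ by arranging that the fuzzy witness produced in $\mathcal{M}$ is two-valued. First I would translate the classical stratified formula $\Psi(x)$ into the language $\mathcal{L}_2$ by replacing each atomic subformula $s \in t$ with $\mu(s,t)=1$; this substitution inserts $\mu$ only in positions that already carried the membership relation, so no type assignment is disturbed and stratification is preserved. Using the auxiliary device of Definition~\ref{def:auxiliary} I would then set $\Phi(x,d) := \bigl(d = 1 \land \Psi(x)\bigr)$, a stratified $\mathcal{L}_2$-formula with $d$ a degree variable of type $0$, and apply to it the corresponding instance of Lemma~\ref{lemmaNFCompDifunrestricted} inside $\mathcal{M}$, obtaining a fuzzy set $A \in U$ witnessing Axiom~\ref{ax:StrFuzzyComp} for $\Phi$.

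Second, I would compute $\mu_{\mathcal{M}}(x,A)$ via the case split dictated by Axiom~\ref{ax:StrFuzzyComp}. If $\Psi(x)$ holds then $\Phi(x,1)$ holds, there is no $v > 1$ in $D$, and the existential clause is witnessed by $v = 1$, so $\mu_{\mathcal{M}}(x,A) = 1$; moreover $\Phi(x,d)$ fails for every $d \neq 1$, so this value is unique. If $\Psi(x)$ fails then $\Phi(x,d)$ fails for every $d \in D$, and the second disjunct of Axiom~\ref{ax:StrFuzzyComp} forces $\mu_{\mathcal{M}}(x,A) = 0$. Hence $\mu_{\mathcal{M}}(x,A) \in \{0,1\}$ for all $x \in U$, so by Definition~\ref{step61} we have $A \in U_{\mathrm{N}}$ and therefore $[A] \in U_{\mathrm{N}}/{\sim}$ is well defined. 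Unwinding the definition of $\in_{\mathrm{N}}$ on equivalence classes then gives $x \in_{\mathrm{N}} [A] \Leftrightarrow \mu_{\mathcal{M}}(x,A) = 1 \Leftrightarrow \Psi(x)$, which is the asserted biconditional.

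The delicate point, and where I expect the real work to sit, is the equivalence between $\Psi(x)$ as evaluated inside $\mathcal{M}$, where the translated quantifiers range over all of $U$, and $\Psi(x)$ as a statement of the crisp model $\mathrm{N}$, where they range over $U_{\mathrm{N}}/{\sim}$. Making the biconditional hold \emph{in} $\mathrm{N}$ needs an absoluteness argument: by induction on the structure of $\Psi$, with parameters drawn from $U_{\mathrm{N}}$, the $\mathcal{M}$-evaluation and the $\mathrm{N}$-evaluation agree, and this in turn requires $U_{\mathrm{N}}$ to contain enough crisp witnesses for the existential quantifiers occurring in $\Psi$. I would secure this by noting that the construction above, applied to the relevant subformulas, itself produces crisp (that is, two-valued) witnesses, so $U_{\mathrm{N}}$ is closed under exactly the operations the comprehension scheme invokes; failing that, one may content oneself with the two-valued reading in which $\Phi$ is evaluated directly in $\mathcal{M}$, for which the case split of the previous paragraph already closes the argument. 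Once this step is in place the lemma follows, and together with Extensionality (previous lemma) and the crisp universal set inherited from Fuzzy Universality, $\mathrm{N}$ is thereby exhibited as a model of $\mathit{NF}$.
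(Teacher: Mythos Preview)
Your approach matches the paper's exactly: form $\Phi(x,d) = (d=1 \land \Psi(x))$, invoke Stratified Fuzzy Comprehension in $\mathcal{M}$, run the two-case analysis to force $\mu_{\mathcal{M}}(x,A) \in \{0,1\}$, and pass to $[A]$ in the quotient. The paper's proof is essentially your first two paragraphs and stops there; it neither spells out the translation of $\in$ into $\mu(\cdot,\cdot)=1$ nor takes up the absoluteness concern you flag in your third paragraph---the question of whether $\Psi$ evaluated in $\mathcal{M}$ over $U$ agrees with $\Psi$ evaluated in $\mathrm{N}$ over $U_{\mathrm{N}}/{\sim}$ is simply not addressed, so your treatment is in fact more careful than the original.
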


\begin{lemma}{Verification of Universality Axiom.}

Let $\mathcal{M}$ be an \emph{Fuzzy} $\mathit{NF}$ model, and let $\mathrm{N}$ be a classical submodel defined as the quotient of the crisp subuniverse.
$$
U_\mathrm{N} = \{A \in U \mid \forall x \in U (\mu_\mathcal{M}(x,A) \in \{0,1\}) \},
$$
by the equivalence relation:
$$
A \sim B \Longleftrightarrow \forall x \in U_\mathrm{N} (\mu_\mathcal{M} (x,A) = \mu_\mathcal{M}(x,B))
$$
The universe of $\mathrm{N}$ is the quotient set $U_\mathrm{N}/\sim$, and membership in $\mathrm{N}$ is defined as follows:
$$
[x] \in_\mathrm{N} [A] \Longleftrightarrow \mu_\mathcal{M} (x, A) = 1
$$
Then, there exists a universal set $[V] \in U_\mathrm{N}/\sim$ such that:
$$
\forall [x] \in U_\mathrm{N}/\sim ([x] \in_\mathrm{N} [V]) \text{    and   } [V] \in_\mathrm{N} [V]
$$

\begin{proof}
First, we prove:
\begin{itemize}
\item Existence of $V$ in $\mathcal{M}$.
By Lemma~\ref{lemmNFUniDifunrestricted}, since $\mathcal{M} \vDash \Sigma$, there exists a set $V \in U$ such that:
$$
\forall x \in U (\mu_\mathcal{M} (x, V) = 1) \text {   and   } \mu_\mathcal{M}(V,V)=1
$$

\item Verification that $V \in U_\mathrm{N}$.
Since $\forall x \in U$, it holds that:
$$
\mu_\mathcal{M} (x,V) = 1 \in \{0,1\}
$$
It follows that $V \in U_\mathrm{N}$.

\item $V$ is an equivalence class $[V] \in U_\mathrm{N}/\sim$.
Let $[V]$ denote the equivalence class of $V$ in the quotient set. Since membership is defined by the relation:
$$
[x] \in_\mathrm{N} [V] \Longleftrightarrow \mu_\mathcal{M} (x, V) = 1
$$
and since by hypothesis $\mu_\mathcal{M}(x, V) = 1$ for all $x \in U$, and in particular for all $x \in U_\mathrm{N}$, it follows that:
$$
\forall [x] \in U_\mathrm{N}/\sim([x] \in_\mathrm{N} [V])
$$
Furthermore, since $\mu_\mathcal{M}(V, V) = 1$, it is guaranteed that $[V] \in_\mathrm{N} [V]$. Thus, it has been shown that the equivalence class $[V]$ serves as the universal set in the model $\mathrm{N}$, thereby confirming the Universality Axiom of $\mathit{NF}$.
\end{itemize}
\end{proof}
\end{lemma}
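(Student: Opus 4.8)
The plan is to push the fuzzy universal set guaranteed inside $\mathcal{M}$ down through the two operations that define $\mathrm{N}$: the restriction to the crisp subuniverse $U_{\mathrm{N}}$ and the passage to $\sim$-equivalence classes. First I would invoke Lemma~\ref{lemmNFUniDifunrestricted}: since $\mathcal{M}\vDash\Sigma$, the Fuzzy Universality axiom holds in $\mathcal{M}$, so there is an element $V\in U$ with $\mu_{\mathcal{M}}(x,V)=1$ for every $x\in U$ and with $\mu_{\mathcal{M}}(V,V)=1$.

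Next I would check that this $V$ survives the filtering. By Definition~\ref{step61}, $U_{\mathrm{N}}=\{A\in U\mid\forall x\in U\,(\mu_{\mathcal{M}}(x,A)\in\{0,1\})\}$; since $\mu_{\mathcal{M}}(x,V)=1\in\{0,1\}$ for all $x\in U$, we obtain $V\in U_{\mathrm{N}}$, and hence $[V]\in U_{\mathrm{N}}/{\sim}$ is a legitimate element of the quotient universe on which $\mathrm{N}$ is built.

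Then I would verify the two stated properties. For universality, take an arbitrary class $[x]\in U_{\mathrm{N}}/{\sim}$ and a representative $x\in U_{\mathrm{N}}\subseteq U$; because $\mu_{\mathcal{M}}(\,\cdot\,,V)\equiv 1$ on all of $U$ we have $\mu_{\mathcal{M}}(x,V)=1$, and this value is independent of the chosen representative of $[x]$ (it is $1$ for every element of $U$), so the definition $[x]\in_{\mathrm{N}}[V]\Leftrightarrow\mu_{\mathcal{M}}(x,V)=1$ gives $[x]\in_{\mathrm{N}}[V]$. For self-membership, $\mu_{\mathcal{M}}(V,V)=1$ immediately yields $[V]\in_{\mathrm{N}}[V]$. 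One also records that $\in_{\mathrm{N}}$ is well defined on classes in its right argument because $\sim$ is, by its very definition, a congruence for the predicate ``$\mu_{\mathcal{M}}(x,\cdot)=1$'' on $U_{\mathrm{N}}$.

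The main obstacle is not the existence of a universal element — that is essentially immediate from the fuzzy axiom — but the bookkeeping around the quotient: one must make sure $[V]$ is genuinely universal over $U_{\mathrm{N}}/{\sim}$ rather than merely over $U$ or over $U/{\sim}$, i.e.\ that restricting to $U_{\mathrm{N}}$ discards no element that $[V]$ ought to contain, and that $\sim$-equivalent representatives yield the same membership verdict. Since $V$ has $\mu_{\mathcal{M}}$-value $1$ on the whole of $U$ it a fortiori has value $1$ on $U_{\mathrm{N}}$, and the quotient only identifies elements; hence neither concern bites, and the argument reduces to the three routine checks above performed in this order.
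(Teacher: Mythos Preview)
Your proposal is correct and follows essentially the same three-step argument as the paper: invoke Lemma~\ref{lemmNFUniDifunrestricted} to obtain $V\in U$ with $\mu_{\mathcal{M}}(\cdot,V)\equiv 1$, observe that this forces $V\in U_{\mathrm{N}}$, and then read off universality and self-membership of $[V]$ in the quotient directly from the definition of $\in_{\mathrm{N}}$. Your added remarks on representative-independence and well-definedness of $\in_{\mathrm{N}}$ are a bit more explicit than the paper's treatment but do not change the approach.
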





\begin{lemma}{Representability of Stratified Formulas by Crisp Sets.}

Let $\Psi(x)$ be a classical stratified formula with parameters in the crisp submodel $\mathrm{N}$. Then there exists a set $[A] \in \mathrm{N}$ such that for all $[x] \in \mathrm{N}$, 
\[
[x] \in_\mathrm{N} [A] \Leftrightarrow \mathrm{N} \vDash \Psi(x).
\]
\begin{proof}
Since $\Psi(x)$ is stratified and $\mathrm{N}$ is a quotient of a model $\mathcal{M} \vDash \Sigma$, there exists a set $A \in U$ such that
\[
\mu(x,A) = 1 \Leftrightarrow \mathcal{M} \vDash \Psi(x)
\]
(by means of Axiom \ref{ax:StrFuzzyComp}). But by definition of $\in_\mathrm{N}$, this implies that
\[
[x] \in_\mathrm{N} [A] \Leftrightarrow \mathrm{N} \vDash \Psi(x).
\]
Therefore, $[A]$ represents $\Psi(x)$ in the crisp submodel.

With this lemma, we guarantee that the crisp submodel $\mathrm{N}$ has the same expressive power as $\mathit{NF}$, being sufficiently strong to interpret all stratified formulas. Thus, we ensure that every (classical) stratified formula is representable by a crisp set—that is, for every property expressible by a stratified formula, there exists a set whose elements are those that satisfy the formula.
\end{proof}
\end{lemma}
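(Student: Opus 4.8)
The plan is to produce $[A]$ by pushing $\Psi$ through the fuzzy Stratified Comprehension that $\mathcal{M}$ already satisfies, and then to verify that the resulting fuzzy set is crisp, so that it descends to the quotient $\mathrm{N}$. First I would reuse the auxiliary device of Definition~\ref{def:auxiliary}: given the classical stratified formula $\Psi(x)$ (together with its parameters, which lie in $U_\mathrm{N} \subseteq U$), put $\Phi(x,d) = (d = 1 \wedge \Psi(x))$ with $d$ a variable of type $0$ ranging over $D$. Adjoining the conjunct $d = 1$ does not disturb the type assignment that witnesses stratification of $\Psi$, so $\Phi$ is a stratified formula of $\mathcal{L}_2$.

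Next I would apply Lemma~\ref{lemmaNFCompDifunrestricted}: since $\mathcal{M} \vDash \Sigma$, there is $A \in U$ realizing the instance of Stratified Fuzzy Comprehension (Axiom~\ref{ax:StrFuzzyComp}) for $\Phi$. Because $\Phi(x,v)$ can hold only at $v = 1$, the biconditional forces $\mu_\mathcal{M}(x,A) = 1$ when $\mathcal{M} \vDash \Psi(x)$ and forces $\mu_\mathcal{M}(x,A) = 0$ otherwise via the second disjunct; in particular $\mu_\mathcal{M}(x,A) \in \{0,1\}$ for every $x \in U$, so by Definition~\ref{step61} we have $A \in U_\mathrm{N}$ and hence $[A] \in U_\mathrm{N}/{\sim}$. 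Unwinding the clause $[x] \in_\mathrm{N} [A] \Leftrightarrow \mu_\mathcal{M}(x,A) = 1$ then gives, for every $[x] \in \mathrm{N}$, the equivalence $[x] \in_\mathrm{N} [A] \Leftrightarrow \mathcal{M} \vDash \Psi(x)$.

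The delicate step — which I expect to be the real obstacle — is passing from $\mathcal{M} \vDash \Psi(x)$ to $\mathrm{N} \vDash \Psi(x)$, since $\mathrm{N}$ lives on the proper subclass $U_\mathrm{N}/{\sim}$ of $U$ and interprets $\in_\mathrm{N}$ merely by reading off $\mu_\mathcal{M}$. To close this I would argue, by induction on the structure of stratified formulas with parameters in $U_\mathrm{N}$, that truth is preserved between $\mathcal{M}$ and $\mathrm{N}$: the atomic case is immediate from the definition of $\in_\mathrm{N}$ together with the quotient construction used in the Extensionality lemma; the Boolean cases are routine; and the quantifier cases require a Tarski--Vaught-type condition, namely that whenever $\mathcal{M}$ witnesses a stratified existential with parameters in $U_\mathrm{N}$, a $\sim$-representative of that witness can already be found in $U_\mathrm{N}$. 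This last point is exactly where the Stratified Comprehension lemma for $\mathrm{N}$ is needed, so the cleanest organisation is to establish comprehension and absoluteness for the stratified fragment by a single simultaneous induction on formula complexity. Once that absoluteness claim is in hand, the displayed equivalence upgrades to $[x] \in_\mathrm{N} [A] \Leftrightarrow \mathrm{N} \vDash \Psi(x)$, and since $\mu_\mathcal{M}(x,A) \in \{0,1\}$ guarantees $[A] \in \mathrm{N}$, the proof is complete.
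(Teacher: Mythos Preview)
Your first two paragraphs reproduce the paper's argument almost verbatim: introduce the auxiliary $\Phi(x,d)=(d=1\wedge\Psi(x))$, invoke Stratified Fuzzy Comprehension in $\mathcal{M}$, read off $\mu_\mathcal{M}(x,A)\in\{0,1\}$, and translate via the definition of $\in_\mathrm{N}$. That is exactly the route the paper takes, and indeed it is essentially a reprise of the preceding Stratified Comprehension lemma.

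Where you diverge is in your third paragraph. The paper simply writes ``by definition of $\in_\mathrm{N}$, this implies $[x]\in_\mathrm{N}[A]\Leftrightarrow\mathrm{N}\vDash\Psi(x)$'' and stops; it never argues that satisfaction of $\Psi$ in $\mathcal{M}$ agrees with satisfaction in the quotient substructure $\mathrm{N}$. You are right to flag this as the genuinely nontrivial step: $\Psi$ is a formula in the language $\{\in\}$, its atomic subformulas are interpreted in $\mathrm{N}$ via $\mu_\mathcal{M}(\cdot,\cdot)=1$, and its quantifiers range only over $U_\mathrm{N}/{\sim}$, so one cannot pass from $\mathcal{M}\vDash\Psi(x)$ to $\mathrm{N}\vDash\Psi(x)$ by definition alone. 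Your proposed remedy --- a simultaneous induction establishing absoluteness and comprehension for the stratified fragment, with the existential step handled by a Tarski--Vaught condition supplied by comprehension itself --- is the natural way to close this, and it is strictly more than what the paper provides. In short: your approach coincides with the paper's up to the point where the paper ends, and then you attempt to fill a gap that the paper leaves open.
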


\begin{theorem}
\label{theorem_3}
There exists a crisp model $\mathrm{N}$ constructed from $\mathcal{M}$ that satisfies $\mathit{NF}$.

\begin{proof}
By the previous lemmas, we have established that the crisp submodel \(\mathrm{N}\), constructed from \(\mathcal{M}\), satisfies the Extensionality and Stratified fundamental Axioms of \(\mathit{NF}\). Additionally, it exhibits the derived property of universality: \(\exists V \in U_\mathrm{N} (\forall x \in U_\mathrm{N} (x \in_\mathrm{N} V) \land V \in_\mathrm{N} V)\). Therefore, we can conclude that \(\mathrm{N}\) is a model of $\mathit{NF}$.

Given a model $\mathcal{M}$ of \emph{Fuzzy} $\mathit{NF}$ ($\Sigma$), obtained via the Compactness Theorem in Theorem~\ref{theorem_2}, there exists a crisp submodel $\mathrm{N}$ that satisfies $\mathit{NF}$. Consequently, if $\mathit{ZF}$ is consistent, then \emph{Fuzzy} $\mathit{ZF}$ is also consistent, as it is an axiomatic extension. If \emph{Fuzzy} $\mathit{ZF}$ is consistent, then so is \emph{Fuzzy} $\mathit{NF}$ (Theorems~\ref{theorem_1} and~\ref{theorem_2}). Finally, if \emph{Fuzzy} $\mathit{NF}$ is consistent, then $\mathit{NF}$ is consistent as well, since the existence of $\mathrm{N}$ follows from $\mathcal{M}$.

After verifying that $\mathrm{N} $ meets the extensionality and the stratified comprehension scheme, it follows that $\mathrm{N}$ also supports all standard constructions of $\mathit{NF}$ derived from stratified formulas, such as \emph{singletons} ($\{a\} = \{x \mid x=a\}$), Cartesian products ($A \times B$), inverse relations, projections, relative products, and tuple insertions, among others. It confirms that $\mathrm{N}$ is a complete model of  $\mathit{NF}$, completing Theorem~\ref{theorem_3} definitively.
\end{proof}

\end{theorem}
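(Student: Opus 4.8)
The plan is to assemble the verification lemmas of this section into the single assertion that $\mathrm{N}$ is a model of $\mathit{NF}$, and then to close the relative-consistency chain $\mathit{ZF} \to$ \emph{Fuzzy} $\mathit{ZF} \to$ \emph{Fuzzy} $\mathit{NF} \to \mathit{NF}$. First I would recall that $\mathit{NF}$ is axiomatized by exactly two principles: Extensionality and the Stratified Comprehension scheme (the existence of a universal set being the instance of comprehension applied to the stratified formula $x = x$). Hence, to obtain $\mathrm{N} \vDash \mathit{NF}$ it suffices to invoke the lemma verifying Extensionality for $\mathrm{N}$ — which relies on passing to the extensional quotient $U_{\mathrm{N}}/{\sim}$, so that $[A] = [B]$ whenever $A$ and $B$ agree on all crisp members, thereby repairing the potential failure caused by sets that coincide on $U_{\mathrm{N}}$ but differ on elements outside it — together with the lemma verifying Stratified Comprehension for $\mathrm{N}$.

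Second, I would spell out the comprehension step, since that is where the construction does the real work. Given a classical stratified $\Psi(x)$, one forms the auxiliary fuzzy formula $\Phi(x,d) \equiv (d = 1 \wedge \Psi(x))$, which is stratified in the sense required by Axiom~\ref{ax:StrFuzzyComp} (the variable $d$ receiving type $0$, so the types already assigned in $\Psi$ are undisturbed). Applying the Stratified Fuzzy Comprehension already verified for $\mathcal{M}$ (Lemma~\ref{lemmaNFCompDifunrestricted}) yields $A \in U$ with $\mu_{\mathcal{M}}(x,A) = 1$ exactly when $\Psi(x)$ holds, and $\mu_{\mathcal{M}}(x,A) = 0$ otherwise, by the two disjuncts of the axiom. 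Because $\mu_{\mathcal{M}}(\cdot,A)$ is then two-valued, $A \in U_{\mathrm{N}}$, and so $[x] \in_{\mathrm{N}} [A] \leftrightarrow \Psi(x)$ by the definition of $\in_{\mathrm{N}}$; the quotient is well defined on this $A$ precisely because $\sim$ only identifies sets agreeing on every crisp member. Combined with the Representability lemma, this delivers comprehension for every stratified $\Psi$.

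Third, I would record the universal set explicitly: the lemma verifying Universality produces $[V] \in U_{\mathrm{N}}/{\sim}$ with $[x] \in_{\mathrm{N}} [V]$ for all $[x]$ and $[V] \in_{\mathrm{N}} [V]$, so $\mathrm{N}$ genuinely exhibits the non-well-founded feature characteristic of $\mathit{NF}$. Then I would conclude the consistency chain: if $\mathit{ZF}$ is consistent so is \emph{Fuzzy} $\mathit{ZF}$ (an axiomatic extension, by the relative-consistency theorem), hence so is \emph{Fuzzy} $\mathit{NF}$ (Theorems~\ref{theorem_1} and~\ref{theorem_2}), hence so is $\mathit{NF}$, with $\mathrm{N}$ as an explicit witness; and I would remark that, extensionality and stratified comprehension being in place, all the usual $\mathit{NF}$ constructions from stratified formulas (singletons, Cartesian products, converses, projections, relative products, tuple insertions) are available in $\mathrm{N}$.

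The main obstacle I anticipate is the transfer of satisfaction of stratified formulas between $\mathcal{M}$ and the crisp submodel $\mathrm{N}$. The comprehension instance living in $\Sigma$ gives a set $A$ whose crisp members are the $x$ with $\mathcal{M} \vDash \Psi(x)$, whereas $\mathit{NF}$ in $\mathrm{N}$ requires a set collecting the $x$ with $\mathrm{N} \vDash \Psi(x)$, and the quantifiers of $\Psi$ range over $U$ in the former but over $U_{\mathrm{N}}/{\sim}$ in the latter. Making this airtight needs either an absoluteness argument for stratified formulas between $\mathcal{M}$ and $\mathrm{N}$ or a reformulation in which the instances fed to compactness are already the relativized ones; I would handle it by an induction on formula complexity showing that $\in_{\mathrm{N}}$ mirrors $\mu_{\mathcal{M}}$ restricted to $U_{\mathrm{N}}$ and that $U_{\mathrm{N}}$ is closed under the set-forming operations invoked, so that the witnesses produced by comprehension always remain inside the crisp universe.
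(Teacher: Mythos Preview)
Your proposal follows essentially the same route as the paper: assemble the verification lemmas (Extensionality via the quotient, Stratified Comprehension via the auxiliary formula $\Phi(x,d)\equiv(d=1\wedge\Psi(x))$, Universality), conclude $\mathrm{N}\vDash\mathit{NF}$, and close the consistency chain, with the remark on standard $\mathit{NF}$ constructions at the end. Your final paragraph, however, goes beyond the paper: the concern you raise about transferring satisfaction of $\Psi$ from $\mathcal{M}$ to $\mathrm{N}$ (quantifiers ranging over $U$ versus $U_{\mathrm{N}}/{\sim}$) is a genuine gap that the paper's Representability lemma simply asserts without the absoluteness or induction argument you sketch, so your instinct there is sound even though the paper does not supply that step.
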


\section{Conclusions}
\label{conclusions}
In set theory, the consistency of $\mathit{NF}$ places this theory as a feasible alternative to $\mathit{ZFC}$, offering an intuitive perspective for handling large-scale collections, such as those encountered in category theory. Using \emph{Fuzzy Forcing}, we can systematically investigate deeper properties of $\mathit{NF}$, including its cardinal arithmetic and ordinal behavior. This approach opens up new research avenues in the foundations of mathematics.

From a philosophical perspective, the consistency of $\mathit{NF}$ supports foundational pluralism, which challenges the dominance of $\mathit{ZFC}$. This point of view encourages the selection of foundational frameworks according to the specific requirements of each problem, much like the impact of non-Euclidean geometries, and is consistent with the movement toward logical pluralism~\cite{Beall2000-BEALP}.

Beyond $\mathit{NF}$, Fuzzy Forcing emerges as a powerful method to model and study mathematical structures in a broad way, due to its capacity and versatility in establishing connections between disparate structures. This suggests significant potential beyond set theory, with promising applications in category theory, topology, algebra, and discrete mathematics. For instance, in graph theory, it could be employed to investigate the existence of specific graphs with particular properties (e.g., fuzzy [or non-fuzzy] graphs exhibiting extremal or non-standard logical and axiomatic properties). Furthermore, within discrete mathematics and set theory, it facilitates the analysis of \emph{unconventional} theories, such as $\mathit{ZF}$ augmented with Aczel’s Anti-Foundation Axiom and other rare set-theoretic variants.

We are currently exploring these applications in several ongoing studies, with preliminary results to be presented in future publications. We hope that this method will inspire novel research, foster collaborative efforts to explore its potential across diverse areas of pure mathematics, and contribute to generating scientific knowledge applicable to engineering and other disciplines.

\bibliographystyle{plainurl}
\bibliography{referencias}

\begin{thebibliography}{10}

\bibitem{Baldwin1980-BALTRO-2}
J.~F. Baldwin and N.~C.~F. Guild.
\newblock The resolution of two paradoxes by approximate reasoning using a fuzzy logic.
\newblock {\em Synthese}, 44(3):397--420, 1980.
\newblock \href {https://doi.org/10.1007/bf00413469} {\path{doi:10.1007/bf00413469}}.

\bibitem{Beall2000-BEALP}
Jc~Beall and Greg Restall.
\newblock Logical pluralism.
\newblock {\em Australasian Journal of Philosophy}, 78(4):475--493, 2000.
\newblock \href {https://doi.org/10.1080/00048400012349751} {\path{doi:10.1080/00048400012349751}}.

\bibitem{Goedel1931}
Kurt G\"{o}del.
\newblock \"{U}ber formal unentscheidbare s\"{a}tze der principia mathematica und verwandter systeme.
\newblock {\em Monatshefte f\"ur Mathematik und Physik}, 38(1):173--198, 1931.

\bibitem{holmes2024nfconsistent}
M.~Randall Holmes and Sky Wilshaw.
\newblock Nf is consistent, 2024.
\newblock URL: \url{https://arxiv.org/abs/1503.01406}, \href {https://arxiv.org/abs/1503.01406} {\path{arXiv:1503.01406}}.

\bibitem{holmes1998elementary}
M.R. Holmes.
\newblock {\em Elementary Set Theory with a Universal Set}.
\newblock Cahiers du Centre de Logique: Centre de Logique. Bruylant-Academia, 1998.
\newblock URL: \url{https://books.google.es/books?id=_vjuAAAAMAAJ}.

\bibitem{9100459b-4987-3c14-a695-06573776c8fc}
Ronald~Bj\"{o}rn Jensen.
\newblock On the consistency of a slight (?) modification of quine's "new foundations".
\newblock {\em Synthese}, 19(1/2):250--264, 1968.
\newblock URL: \url{http://www.jstor.org/stable/20114640}.

\bibitem{10.5555/202684}
George~J. Klir and Bo~Yuan.
\newblock {\em Fuzzy sets and fuzzy logic: theory and applications}.
\newblock Prentice-Hall, Inc., USA, 1994.

\bibitem{Lukasiewicz1988-UKAOLT-2}
Jan \L{}ukasiewicz.
\newblock O logice tr\'{o}jwarto\'{s}ciowej.
\newblock {\em Studia Filozoficzne}, 270(5), 1988.

\bibitem{10.1007/978-3-030-79876-5_37}
Leonardo~de Moura and Sebastian Ullrich.
\newblock The lean 4 theorem prover and programming language.
\newblock In {\em Automated Deduction – CADE 28: 28th International Conference on Automated Deduction, Virtual Event, July 12–15, 2021, Proceedings}, page 625–635, Berlin, Heidelberg, 2021. Springer-Verlag.
\newblock \href {https://doi.org/10.1007/978-3-030-79876-5_37} {\path{doi:10.1007/978-3-030-79876-5_37}}.

\bibitem{GlossarWiki:Quine:1937}
Willard Van~Orman Quine.
\newblock New foundations for mathematical logic.
\newblock {\em The American Mathematical Monthly}, 44(2):70--80, 1937.
\newblock URL: \url{http://www.jstor.org/discover/10.2307/2300564}.

\bibitem{Russell1910-RUSPMV}
Bertrand Russell and Alfred~North Whitehead.
\newblock {\em Principia Mathematica Vol. I}.
\newblock Cambridge University Press, 1910.

\bibitem{Specker1954-SPETAO-14}
Ernst~P. Specker.
\newblock The axiom of choice in quine's new foundations for mathematical logic.
\newblock {\em Journal of Symbolic Logic}, 19(2):127--128, 1954.
\newblock \href {https://doi.org/10.2307/2268878} {\path{doi:10.2307/2268878}}.

\bibitem{ZADEH1965338}
L.A. Zadeh.
\newblock Fuzzy sets.
\newblock {\em Information and Control}, 8(3):338--353, 1965.
\newblock URL: \url{https://www.sciencedirect.com/science/article/pii/S001999586590241X}, \href {https://doi.org/10.1016/S0019-9958(65)90241-X} {\path{doi:10.1016/S0019-9958(65)90241-X}}.

\bibitem{GlossarWiki:Zermelo:1908b}
Ernst Zermelo.
\newblock {Untersuchungen \"uber die Grundlagen der Mengenlehre}.
\newblock {\em Mathematische Annalen}, 65(2):261--281, 1908.
\newblock Der Artikel wurde am 30. Juli 1907 in Chesières fertiggestellt.
\newblock URL: \url{http://link.springer.com/article/10.1007/BF01449999, https://gdz.sub.uni-goettingen.de/dms/load/img/?PPN=PPN235181684_0065}.

\end{thebibliography}

\end{document}